\newtheorem{lemma}{Lemma}[section]
\newtheorem{thm}[lemma]{Theorem}
\newtheorem{hyp}[lemma]{Hypothesis}
\journal{Archiv der Mathematik}
\begin{document}

\setlength{\parindent}{0mm}

\newcommand{\m}{$\,\textrm{max}\,$}
\newcommand{\w}{\widehat}
\newcommand{\wi}{\widehat}
\newcommand{\ov}{\overline}
\newcommand{\N}{\mathbb{N}}
\def \P{\mathbb{P}}

\newcommand{\E}{\mathcal{E}}
\newcommand{\K}{\mathcal{K}}
\newcommand{\sym}{\textrm{Sym}}
\newcommand{\A}{\textrm{Alt}}

\newcommand{\Z}{\mathbb{Z}}

\newcommand{\wt}{\widetilde}
\newcommand{\wh}{\widehat}
\newcommand{\ti}{\tilde}

\newcommand{\M}{$\textrm{M}$}
\newcommand{\J}{$\textrm{J}$}
\newcommand{\ch}{$\textrm{char}$}
\newcommand{\sy}{$\,\textrm{Syl}$}
\newcommand{\au}{$\textrm{Aut}$}
\newcommand{\PSL}{$\textrm{PSL}$}
\newcommand{\PSU}{$\textrm{PSU}$}
\newcommand{\PGL}{$\textrm{PGL}$}
\newcommand{\PGaL}{P\Gamma L}
\newcommand{\GL}{$\textrm{GL}$}
\newcommand{\GU}{$\textrm{GU}$}
\newcommand{\Sp}{$\textrm{Sp}$}
\newcommand{\PSp}{$\textrm{PSp}$}
\newcommand{\Sz}{$\textrm{Sz}$}
\newcommand{\SL}{$\textrm{SL}$}
\newcommand{\SU}{$\textrm{SU}$}
\newcommand{\F}{$\textrm{GF}$}
\newcommand{\C}{$\textrm{C}$}
\newcommand{\FO}{\textrm{fix}_{\Omega}}
\newcommand{\FL}{\textrm{fix}_{\Lambda}}
\newcommand{\FD}{\textrm{fix}_{\Delta}}
\newcommand{\fixO}{\textrm{fix}_{\Omega}}
\newcommand{\fixL}{\textrm{fix}_{\Lambda}}
\newcommand{\out}{$\textrm{Out}$}
\newcommand{\Sym}{\textrm{Sym}}
\newcommand{\Alt}{\textrm{Alt}}
\newcommand{\rank}{$\textrm{r}$}
\newcommand{\He}{$\textrm{He}$}
\newcommand{\aut}{\textrm{Aut}}

\def \<{\langle }
\def \>{\rangle }
\def \L{\mathcal{L}}

\begin{frontmatter}

\title{Finite soluble groups that act with fixity 2 or 3}

\author{Paula H\"ahndel, Christoph M\"oller and Rebecca Waldecker}

\begin{abstract}
In this article
we prove results about
finite soluble groups that act with fixity 2 or 3. 
\end{abstract}

\begin{keyword}
Soluble permutation group, fixed points, fixity, soluble group.
\end{keyword}
\end{frontmatter}


\section{Introduction}

\vspace{0.2cm}
While permutation groups that act with low fixity are interesting in their own right, from a purely group theoretic perspective, 
we became interested in them because of applications to Riemann surfaces.
As far as we know, the notion of \textbf{fixity} goes back to Ronse (see \cite{Ro1980}): We say that \textbf{a group $G$ has fixity $k \in \N$ on a set $\Omega$} if and only if
$k$ is the maximum number of fixed points of elements of $G^\#$ on $\Omega$.
More background on our motivation and on applications can be found in \cite{MW2}, which was the
first article that we published on groups that act with low fixity.
Since this project had started in 2012, Kay Magaard and the third author have received numerous questions about special cases of 
group actions with low fixity, and often about soluble groups.
Therefore, the main purpose of this paper is to prove results for soluble groups that act with fixity 2 or 3.

Here is the main hypothesis for the remainder of this paper:

\begin{hyp}\label{hypkfix}
Suppose that $G$ is a finite, transitive permutation
group with permutation domain $\Omega$. Suppose further that
$k \in \N$ and that $G$ acts with fixity~$k$ on $\Omega$.
\end{hyp}

\begin{thm}\label{solu2main}
Suppose that Hypothesis  \ref{hypkfix} holds, that $G$ is soluble and that $k=2$.
Then one of the following is true:

(1) $G$ has a regular normal subgroup.

(2) $G$ has a normal subgroup of index 2 with two orbits of equal length, acting as a Frobenius group on each of them.

(3) $G \cong \Alt_4$ or $G \cong \Sym_4$ and $|\Omega|=6$.

\end{thm}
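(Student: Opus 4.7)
The plan is to start from a point stabilizer $H:=G_\omega$ and exploit the observation that, since $G$ acts with fixity $2$, every nontrivial element of $H$ fixes at most one further point; hence $H$ acts on $\Omega\setminus\{\omega\}$ with fixity at most $1$. Consequently each $H$-orbit on $\Omega\setminus\{\omega\}$ is either a regular orbit or a Frobenius-type orbit. If every such orbit is regular, then $G_\omega\cap G_{\omega'}=1$ for all $\omega'\ne\omega$, so $G$ is Frobenius on $\Omega$; since $G$ is soluble, Thompson's theorem supplies a nilpotent Frobenius kernel acting regularly, and conclusion (1) follows.

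Otherwise there exist distinct $\omega,\omega'\in\Omega$ with $K:=G_\omega\cap G_{\omega'}\ne 1$. Fixity $2$ then forces every nontrivial $k\in K$ to fix exactly the two points $\omega$ and $\omega'$, and from this two rigidity facts follow immediately: $K$ is a trivial-intersection (TI) subgroup of $G$, and $N_G(K)$ equals the setwise stabilizer $G_{\{\omega,\omega'\}}$, with $[N_G(K):K]\in\{1,2\}$. This TI structure will drive the rest of the argument.

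Next, using the solubility of $G$, pick a minimal normal subgroup $M$; it is an elementary abelian $p$-group. If $M$ is transitive on $\Omega$, then abelianness forces any point stabilizer in $M$ to fix the whole $M$-orbit, so $M$ is regular and conclusion (1) holds. Hence we may assume $M$ intransitive, and its orbits form a nontrivial block system $\mathcal{B}$ for $G$. Either $M$ is semiregular, with all blocks of size $|M|$, or some nontrivial element of $M_\omega$ fixes the entire $M$-orbit of $\omega$; in the latter situation fixity $2$ together with $G$-transitivity forces every $M$-orbit to have size exactly $2$ and $M$ to be an elementary abelian $2$-group.

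The main obstacle is to extract conclusions (1)--(3) from these two block-system scenarios. In the semiregular case, one analyses how $H$ and its TI subgroup $K$ sit relative to $\mathcal{B}$: combining the Frobenius/regular dichotomy for $H$-orbits with the block structure should either produce a regular normal subgroup (giving (1)) or force $|\mathcal{B}|=2$, so that the kernel of the action of $G$ on $\mathcal{B}$ becomes an index-$2$ normal subgroup whose two equal-length orbits inherit a Frobenius action from the TI property of $K$, giving (2). The "blocks of size $2$" case requires passing to $G/M$ acting on $\mathcal{B}$ and analysing how the $2$-element blocks and the TI rigidity of $K$ restrict the quotient; in the tightest corner this forces the sporadic configurations $G\in\{\Alt_4,\Sym_4\}$ with $|\Omega|=6$ in (3), verified by direct inspection (with the Klein four subgroup playing the role of $M$). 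The hardest part is orchestrating the interplay between the TI structure of $K$, the block system, and the soluble quotient $G/M$ cleanly enough that exactly these three conclusions emerge.
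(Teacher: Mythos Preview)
Your outline has the right opening moves, and the paper also pivots on a minimal normal subgroup $M$ and the semiregular/non-semiregular dichotomy. The genuine gap is your treatment of the semiregular case. You assert that the TI property of $K$ together with the block system ``should either produce a regular normal subgroup \dots\ or force $|\mathcal B|=2$'', but nothing you have written forces $|\mathcal B|\le 2$, and in fact this is where the real work lies. The paper proceeds by an inductive (minimal-counterexample) argument: one first proves the nontrivial fact that $\bar G:=G/M$ acts with fixity at most $2$ on $\bar\Omega=\mathcal B$ (this requires a careful count of involutions in $M\langle g\rangle$ when $p=2$), then applies the theorem to $\bar G$. Each of the three conclusions for $\bar G$ must then be pulled back to $G$: conclusion~(1) lifts directly; conclusion~(3) for $\bar G$ (i.e.\ $\bar G\cong\Alt_4$ or $\Sym_4$ on six $M$-orbits) has to be \emph{excluded} by a separate argument; and conclusion~(2) for $\bar G$ requires showing that a preimage of an element of a Frobenius complement in $\bar F$ actually fixes a point in each $F$-orbit on $\Omega$, which uses a coprimality argument between $|\bar H|$ and $|\bar K|$. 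Your TI subgroup $K$ does not by itself supply any of this, and the sentence ``force $|\mathcal B|=2$'' is simply not justified.

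There is also a smaller gap in the non-semiregular case. You correctly observe that the $M$-orbits then have size $2$ and $p=2$, but you stop short of deducing $|M|=4$ and $|\Omega|=6$. The paper gets $|M|=4$ by noting that two distinct point stabilisers in $M$ each have index~$2$ and intersect trivially (fixity~$2$), hence $|M|\le 4$; then the four subgroups of order~$2$ give at most three orbits, so $|\Omega|\le 6$, and the case $|\Omega|=4$ is eliminated separately (it would force $G\cong D_8$, which has no minimal normal subgroup of order~$4$). Without pinning down $|M|$ and $|\Omega|$ you cannot conclude $G\cong\Alt_4$ or $\Sym_4$ by ``direct inspection''.
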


\begin{thm}\label{fix3solu}
Suppose that Hypothesis  \ref{hypkfix} holds, that $G$ is soluble and that $k=3$.
Then one of the following is true:

(1) $G$ has a regular normal subgroup.

(2) $G$ has a normal subgroup $F$ of index 3 or 6 that has three orbits on $\Omega$, of equal length,
and such that $F$ acts on each of them as a Frobenius group.

(3) $|\Omega|=6$ and $G$ has structure $(C_3 \times C_3) : C_4$, with point
stabilisers isomorphic to $\Sym_3$. 
\end{thm}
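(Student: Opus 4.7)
The plan is to analyze a minimal normal subgroup $N$ of $G$. Since $G$ is soluble, $N$ is elementary abelian of some prime exponent $p$. If $N$ acts transitively on $\Omega$, then, being abelian, it acts regularly, so conclusion (1) holds. I therefore assume $N$ is intransitive with $r \geq 2$ orbits of common size $m$, forming a $G$-invariant block system $\Sigma$. Setting $F := \ker(G \to \Sym(\Sigma))$, I have $F \trianglelefteq G$, $N \leq F$, and $G/F$ embeds faithfully and transitively into $\Sym_r$. This $F$ is the candidate normal subgroup of conclusion (2).

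Because $N$ is abelian, the pointwise stabilizer in $N$ of any orbit $\Delta$ of $N$ equals the $N$-stabilizer of any single point in $\Delta$, so any non-identity element of this stabilizer fixes all $m$ points of $\Delta$. The fixity 3 bound then forces the dichotomy: either $m \leq 3$, or $N$ acts regularly on each of its orbits (so $N$ is semi-regular on $\Omega$ and $m = |N|$). The main case is $r = 3$: then $G/F \leq \Sym_3$ gives $[G:F] \in \{3,6\}$ as required, and the three $N$-orbits are exactly the $F$-orbits on $\Omega$. It remains to show that $F$ acts on each of these orbits as a Frobenius group. Transitivity on each orbit is immediate from $N \leq F$. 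For the Frobenius property, I would suppose that some $f \in F^\#$ fixes two points $\alpha, \beta$ in a common orbit $\Delta$ and derive a contradiction by conjugating $f$ by elements of $N$: since $N$ is abelian, $N$-translation permutes $\fixO(f)$ orbit-by-orbit, so I can produce enough distinct $N$-conjugates of $f$ with overlapping fixed-point patterns inside $\Delta$ to violate the fixity 3 bound (using that $f$ has at most three fixed points distributed across only three orbits).

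The remaining cases $r = 2$ and $r \geq 4$ require separate treatment. When $r \geq 4$, any element realizing fixity 3 fixes at most three blocks of $\Sigma$ (those containing its fixed points), and combining this with the $m \leq 3$-or-semi-regular dichotomy above should eliminate this case by a careful counting of fixed points within and across blocks. The case $r = 2$ is the one producing the exceptional configuration: $F$ has index $2$ in $G$ with two equal-length orbits, and one analyzes $F$'s action on one orbit. Either a regular normal subgroup of $F$ on that orbit lifts to a regular normal subgroup of $G$ on $\Omega$ (conclusion (1)), or the non-semi-regular branch with $m \leq 3$ together with the Sylow $p$-structure forces the sporadic configuration $|\Omega| = 6$, $m = 3$, $|N| = 9$, $G \cong (C_3 \times C_3):C_4$ with $G_\alpha \cong \Sym_3$ of conclusion (3). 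I expect the main obstacle to be precisely this $r = 2$, $m = 3$ sub-case: cleanly separating conclusion (1) from the sporadic case (3) will require careful Sylow-theoretic bookkeeping of how the index-2 subgroup $F$ interacts with point stabilizers, and I would invoke the companion Theorem \ref{solu2main} on suitable restricted or induced actions to help organise the case split.
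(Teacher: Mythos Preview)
Your framework (minimal normal $N$, the block system $\Sigma$ of $N$-orbits, the kernel $F$ of the action on $\Sigma$) matches the paper's setup, and your dichotomy ``$m\le 3$ or $N$ semi-regular'' is correct. But two genuine gaps remain.

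First, your Frobenius argument in the $r=3$ case does not work as written. Conjugating $f$ by $n\in N$ gives $\fixO(f^{n})=\fixO(f)^{n}$, so you obtain many \emph{different} elements each with at most three fixed points; that never violates fixity~3, which bounds fixed points of a single element. There is no mechanism in your sketch that forces one element to acquire too many fixed points. In the paper this case (Lemma~\ref{fix3solSC1}) is handled quite differently: one takes $y\in K_\alpha$ of prime order, observes that either $y$ has one fixed point per $N$-orbit (Frobenius case), or $o(y)\in\{2,3\}$ with all fixed points in a single orbit, and then rules out the latter by counting involutions (for $p=2$) or by a maximal-class argument on the $3$-group $N\langle y\rangle$ together with \texttt{GAP} checks for $|N|\le 27$ (for $p=3$). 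Your conjugation idea does not substitute for this.

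Second, and more structurally, you treat $r=3$ as the ``main case'' and hope to \emph{eliminate} $r\ge 4$ by counting. This is backwards. The case $r\ge 4$ is not eliminated; it is where the real recursion happens. The paper passes to $\bar G=G/N$ acting on $\bar\Omega$, shows this action has fixity at most~3 (Lemma~\ref{fix3sol1a}), and then splits according to the fixity of $\bar G$: fixity~1 gives a regular normal subgroup directly; fixity~2 invokes Theorem~\ref{solu2main} on $\bar G$ (Lemma~\ref{fix3solSC2}); and fixity~3 is handled by induction (minimal counterexample), applying the present theorem to $\bar G$ and then lifting each of the three conclusions back to $G$ with substantial further work (including the $|N|\in\{3,9\}$ analyses of Lemmas~\ref{fix3sol1b}--\ref{fix3sol1d}). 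Your proposal has no inductive step at all, and your invocation of Theorem~\ref{solu2main} is misplaced: it is needed for the quotient action when $r\ge 4$, not for the $r=2$ sub-case. Without this recursion you cannot reach conclusion~(2), because the normal subgroup $F$ of the theorem is in general \emph{not} your kernel of the block action but rather the pre-image of the corresponding subgroup obtained inductively for $\bar G$.
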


We are able to keep the preliminaries short, referring to earlier work, and then we focus on results that will later be applied to soluble groups. Some of our analysis is phrased in a more general way, because it was not always necessary to suppose that the group is soluble, and this might have useful applications later. A natural case distinction comes from the cases of fixity 2 and fixity 3,
which we look at in seperate sections. We also want to mention that soluble groups of fixity 4 are also worth inverstigating, but we did not include our first results here because they are preliminary and, already, quite technical and intricate.

\begin{center}

\textbf{Acknowledgments}

\end{center}

We thank everyone who has asked us interesting questions about
groups that act with low fixity. This article has been written with our friend and colleague Kay Magaard
in mind.


\section{Preliminaries}

\vspace{0.2cm}

All groups in this article are meant to be finite, and we use standard notation for orbits and point stabilisers.
Let $\Omega$ be a finite set and suppose that a group $G$ acts on $\Omega$.
Then for all $\Delta
\subseteq \Omega$, all $g \in G$ and all $H \le G$ we let
$\FD(H):=\{\delta \in \Delta \mid \delta^h=\delta$ for all $h \in
H\}$ denote \textbf{the fixed point set of $H$ in $\Delta$} and abbreviate
$\FD(\<g\>)$ by $\FD(g)$.
For all $n \in \N$, we denote the cyclic group of order $n$ by
$\C_n$.\\

Next we collect a few facts from previous papers 
for reference in this article:

\begin{lemma}\label{normaliser}
Suppose that Hypothesis \ref{hypkfix} holds and let $\alpha \in \Omega$.

(i) If $1 \neq X \le G_{\alpha}$, then $|N_G(X):N_{G_\alpha}(X)| \le k$.
In particular, if
$\FO(X)=\{\alpha\}$, then
$N_G(X) \le G_\alpha$.

(ii) If $x \in G_{\alpha}^\#$, then $|C_G(x):C_{G_\alpha}(x)| \le k$. In particular, if
$\FO(x)=\{\alpha\}$, then
$C_G(x) \le G_\alpha$.


(iii) $|Z(G)|$ divides $k$.

\end{lemma}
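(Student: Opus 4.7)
The three parts share a common template: when a subgroup $H \le G$ normalises (or centralises, or is central with respect to) some object that fixes $\alpha$, $H$ permutes the associated fixed-point set, so its orbit on $\alpha$ has length at most the size of that set, and the definition of fixity bounds this by $k$.

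For (i), I would verify that $N_G(X)$ acts on $\FO(X)$. For $g \in N_G(X)$, $x \in X$ and $\beta \in \FO(X)$, one has $(\beta^g)^x = \beta^{(gxg^{-1})g} = \beta^g$ since $gxg^{-1} \in X$. The $N_G(X)$-stabiliser of $\alpha$ is visibly $N_G(X) \cap G_\alpha = N_{G_\alpha}(X)$, so the orbit of $\alpha$ has length $|N_G(X):N_{G_\alpha}(X)|\le|\FO(X)|$. Since $X\neq 1$, picking any $x\in X^{\#}$ gives $\FO(X)\subseteq\FO(x)$, and the fixity hypothesis yields $|\FO(x)|\le k$. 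The supplementary statement is immediate: if $\FO(X)=\{\alpha\}$, the orbit of $\alpha$ collapses to $\{\alpha\}$, so $N_G(X)\le G_\alpha$. Part (ii) is the same recipe with $C_G(x)$ in place of $N_G(X)$: for $g\in C_G(x)$ and $\beta\in\FO(x)$, $(\beta^g)^x=\beta^{gx}=\beta^{xg}=\beta^g$, the stabiliser of $\alpha$ is $C_{G_\alpha}(x)$, and the orbit length is bounded by $|\FO(x)|\le k$.

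For (iii), I would first argue that $Z(G)$ acts semi-regularly on $\Omega$: any $z\in Z(G)\cap G_\alpha$ satisfies $(\alpha^g)^z=\alpha^{gz}=\alpha^{zg}=\alpha^g$ for every $g\in G$, so by transitivity $z$ fixes all of $\Omega$ and faithfulness of the action forces $z=1$. If $G$ is regular, then $k=0$ and the divisibility is vacuous; otherwise pick $x\in G^{\#}$ with $|\FO(x)|=k$, which exists by the definition of fixity as a maximum. The set $\FO(x)$ is $Z(G)$-invariant because $z\in Z(G)$ and $\beta\in\FO(x)$ give $(\beta^z)^x=\beta^{zx}=\beta^{xz}=\beta^z$. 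Since $Z(G)$ acts without fixed points on the non-empty set $\FO(x)$, it partitions $\FO(x)$ into orbits of length $|Z(G)|$, and therefore $|Z(G)|$ divides $k$.

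There is no real obstacle: each part reduces to a short conjugation computation plus the observation that some subgroup acts on a fixed-point set whose size is controlled by $k$. The only point requiring care is that for (iii) one must choose $x$ realising the fixity bound (and treat the regular case separately), since $|Z(G)|$ will in general divide $|\FO(x)|$ rather than any smaller value.
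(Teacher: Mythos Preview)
Your argument is correct: each part is exactly the standard orbit--stabiliser computation on the relevant fixed-point set, and your treatment of (iii) via semi-regularity of $Z(G)$ on $\Omega$ and a choice of $x$ realising the fixity is the natural one. The paper itself does not give a proof here but simply cites Lemma~2.2 of \cite{MW2corr}; your self-contained argument is the expected one and almost certainly coincides with what is written there, so there is nothing to compare.
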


\begin{proof}
We have proven these statements (and some more) in \cite{MW2corr} as Lemma 2.2.
\end{proof}


\section{Soluble groups that act with fixity 2}

\begin{hyp}\label{2solu}
Suppose that Hypothesis \ref{hypkfix} holds and that $k=2$.
Suppose further that $N$ is an abelian minimal normal subgroup of $G$ and let $p$ be a prime and $n \in \N$ be such that $|N|=p^n$.
We let $\bar \Omega$ denote the set of $N$-orbits on $\Omega$ and we set $\bar G:=G/N$.
\end{hyp}

\begin{lemma}\label{fix2sol1}
Suppose that Hypothesis \ref{2solu} holds.
Then one of the following holds:

(1) $N$ acts semi-regularly on $\Omega$ or

(2) $|N|=4$, the $N$-orbits have length 2 and $|\Omega| =6$. Moreover $G \cong \Alt_4$ or $\Sym_4$.
\end{lemma}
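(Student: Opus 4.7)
The plan is to assume $N$ does not act semi-regularly on $\Omega$ and to show that conclusion (2) must hold. Pick $\alpha \in \Omega$ with $N_\alpha \neq 1$. Since $N$ is abelian, the stabilizer in $N$ is constant on each $N$-orbit, so $N_\alpha$ fixes the entire $N$-orbit of $\alpha$ pointwise. By fixity $2$ that orbit has length at most $2$; length $1$ is impossible because normality of $N$ and transitivity of $G$ would then give $N \le \bigcap_{\beta \in \Omega} G_\beta = 1$. Since $G$ permutes the $N$-orbits transitively, every $N$-orbit has length exactly $2$, forcing $p = 2$ and $|N| = 2^n$ with $n \ge 2$.

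Next I would carry out a counting argument. Writing $H_O \le N$ for the common stabilizer (of index $2$) of an $N$-orbit $O$, fixity $2$ forbids any nontrivial element of $N$ from lying in two different $H_O$'s, so
\[
|\bar\Omega|\,(2^{n-1}-1) \;\le\; 2^n - 1.
\]
The case $|\bar\Omega|=1$ is absurd because then $|\Omega|=2$ forces $|G|\le 2$ while $|N|\ge 4$. If $|\bar\Omega|=2$, the two stabilizers are distinct index-$2$ subgroups of $N$ with trivial intersection, forcing $n=2$; but then $G$ only swaps $H_{O_1}$ and $H_{O_2}$ among themselves, leaving the third subgroup of order $2$ in $N = V_4$ fixed by $G$, contradicting the minimal normality of $N$. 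Hence $n = 2$, $|\bar\Omega|=3$, and $|\Omega|=6$.

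It remains to identify $G$. Since $G/N$ embeds transitively in $\Sym_3$, $|G|\in\{12,24\}$. If $|G|=12$, minimality of $N$ forces $G/N = C_3$ to act as a $3$-cycle on $N^\#$, and the unique such extension is $G \cong \Alt_4$. If $|G|=24$, then $G/C_G(N)$ embeds transitively in $\aut(N) = \Sym_3$, so $|C_G(N)| \in \{4,8\}$; the subcase $C_G(N) = N$ yields the unique extension $G \cong \Sym_4$.

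The main obstacle is to rule out the remaining case $|C_G(N)|=8$. In it, $C_G(N)$ is the (normal) Sylow $2$-subgroup of $G$, and it is abelian because $N \le Z(C_G(N))$ forces $C_G(N)/Z(C_G(N))$ to be cyclic. It cannot be $C_4 \times C_2$: the $C_3$-action would have to fix each $C_4$-subgroup (there being only two of them), act trivially on each by $|\aut(C_4)|=2$, and therefore fix a nontrivial element of $N$, contradicting the transitivity of $C_3$ on $N^\#$. So $C_G(N) \cong C_2^3$. Now the three point stabilizers $G_{\alpha_i}$ are three subgroups of order $4$ in $C_G(N)$ with distinct intersections $H_{O_i}$ with $N$, hence three distinct $2$-dimensional $\mathbb{F}_2$-subspaces of $C_G(N) \cong \mathbb{F}_2^3$. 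Any two such subspaces meet in a $1$-dimensional subspace, and a nontrivial element of $G_{\alpha_1} \cap G_{\alpha_2}$ then fixes both points of each of two distinct $N$-orbits, giving $4$ fixed points and contradicting fixity $2$. Hence $G \cong \Sym_4$ in this subcase.
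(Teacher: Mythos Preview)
Your argument is correct in outline and in most details, but there is one gap you should close. When you write ``$G/N$ embeds transitively in $\Sym_3$'', you are implicitly asserting that $N$ is the full kernel $K$ of the $G$-action on the three $N$-orbits. This needs a short justification: any $g\in K^\#$ acts on each size-$2$ orbit as the identity or a swap, and by fixity~$2$ it fixes at most one orbit pointwise; hence $g$ is a double- or triple-transposition, so $|K^\#|\le 4$ and $K=N$. (This is exactly what the paper verifies.) Once $G/N\cong \Sym_3$ is established in the $|G|=24$ case, the subcase $|C_G(N)|=8$ dies immediately: $C_G(N)/N$ would be a normal subgroup of order~$2$ in $\Sym_3$, which does not exist. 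Thus your careful $\mathbb F_2^3$ analysis of that subcase, while correct, is more than you need.

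Compared with the paper, your route is broadly the same but with some pleasant variations. You exploit directly that $N$ is abelian to see $N_\alpha$ fixes its whole $N$-orbit, whereas the paper uses a $p$-group fixed-point count on $\alpha^N$ to force $p=2$ and then invokes $|Z(N)|\le 2$ to bound the orbit length. For $|\bar\Omega|=2$ you appeal to minimality of $N$ (the ``third'' order-$2$ subgroup of $V_4$ is $G$-invariant), while the paper instead identifies $G\cong D_8$ and notes that $D_8$ has no minimal normal subgroup of order~$4$. Conversely, the paper carries out the kernel computation you omit and then simply asserts that the extension of $V_4$ by $\Alt_3$ or $\Sym_3$ (with $N$ minimal normal) must be $\Alt_4$ or $\Sym_4$; your treatment of the extension problem is more explicit.
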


\begin{proof}
Suppose that (1) does not hold and let $\alpha \in \Omega$.
Then $N_\alpha \neq 1$ and $N_\alpha \neq N$ because $N \unlhd G$.
We note that $|\alpha^N|$ is a power of p and therefore the number of fixed points of $N_\alpha$ on $\alpha^N$
is divisible by $p$. Then our fixity 2 hypothesis forces $p=2$, and $N_\alpha$ fixes exactly two points.
If $|\alpha^N|>2$, then $|N| \ge 4$ and acts with fixity 2 on a set of size at least 4, which means that
$|Z(N)| \le 2$. But this is false because $N$ is abelian.
Now all $N$-orbits have size 2, which means that the point stabilisers have index 2 and intersect trivially, and therefore $|N|=4$
and $|\Omega| \le 6$. This implies (2).

If $|\Omega|=4$, then $G$ is isomorphic to a transitive subgroup of $\Sym_4$ and
$|\bar \Omega|=2$. 
We prove that $N$ is exactly the kernel of the action of $G$ on $\bar \Omega$:
If $g \in G^\#$ acts trivially on $\bar \Omega$, then it acts as a transposition or a double-transposition on $\Omega$, while stabilising the two $N$-orbits, both of which have size 2. There are only three possibilities for that, which gives that $g \in N$.
Consequently $\bar G$ acts faithfully on $\bar \Omega$.
Now $|G|=8$ and $G \cong D_8$, which does not have a minimal normal subgroup of order 4.

If $|\Omega|=6$, then $G$ is isomorphic to a transitive subgroup of $\Sym_6$ and
$|\bar \Omega|=3$. If $g \in G^\#$ acts trivially on $\bar \Omega$, then, because of the fixity 2 hypothesis, it acts as a double-transposition or as a triple-transposition on $\Omega$, while stabilising each $N$-orbit. There are only four possibilities for that, hence $g \in N$ and it follows that
$\bar G$ acts faithfully and transitively on $\bar \Omega$.
Now $\bar G \cong \Sym_3$ or $\bar G \cong \Alt_3$, and we recall that $N$ is elementary abelian of order 4.
Then  $G \cong \Sym_4$ or $G \cong \Alt_4$.
\end{proof}

\begin{lemma}\label{fix2sol2}
Suppose that Hypothesis \ref{2solu} holds and that
 $N$ acts semi-regularly on $\Omega$. Then $\bar G$ acts with fixity at most 2 on $\bar \Omega$.
 If $g \in G\setminus N$ is such that $\bar g \in \bar G$ fixes exactly two $N$-orbits, then it either fixes a unique point on each of them or
$N=Z(G)$ has order 2.
\end{lemma}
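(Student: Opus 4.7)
The plan is a double-counting argument in the coset $gN$, driven by semi-regularity of $N$. For the first assertion, fix $\bar g\in\bar G^\#$ and a $\bar g$-fixed $N$-orbit $\Delta$ on $\Omega$. For each $\alpha\in\Delta$, the condition $g\alpha\in\Delta=N\alpha$ together with semi-regularity yields a unique $n_\alpha\in N$ with $n_\alpha^{-1}g\in G_\alpha$; hence each point of $\Delta$ is fixed by exactly one element of $gN$, contributing $|\Delta|=|N|$ to the fixed-point total of the coset. A short calculation (using $G_\alpha\cap N=1$ and $N$ abelian) shows that if $g'\in gN$ fixes $\alpha$, then $\mathrm{fix}_{\Delta}(g')=C_N(g')\cdot\alpha$, of cardinality $c:=|C_N(g)|$ independent of the choice of $g'\in gN$. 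If $\bar g$ fixes $s$ orbits then $gN$ has $s|N|$ fixed points distributed among $|N|$ elements, each contributing at most $k=2$; hence $s\le 2$.

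For the second assertion, with $s=2$ the coset has $2|N|$ fixed points among $|N|$ elements, each contributing at most $2$ in blocks of $c$; this forces $c\in\{1,2\}$. If $c=1$, equality in the counting makes every coset element, including $g$, fix exactly one point on each of $\Delta_1$ and $\Delta_2$, giving the first alternative.

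If $c=2$, then $p=2$ and $C_N(g)=\<t\>$ for an involution $t$; no coset element can fix points on both orbits (else more than $2$ fixed points), so the two fixed points of $g$ lie on a single orbit, say $\Delta_1$, and have the form $\{\alpha,t\alpha\}$. I would then observe that $t$ centralises every element of $gN$ (since $N$ is abelian, $t\cdot gn=gn\cdot t$ for each $n\in N$) and that $G_\alpha\cap G_{t\alpha}\le C_G(t)$: if $h$ fixes both $\alpha$ and $t\alpha$, then $(hth^{-1})\alpha=ht\alpha=t\cdot\alpha$, so semi-regularity forces $hth^{-1}=t$. The plan is to conclude that $\<t\>\lhd G$; by minimality of $N$ this forces $\<t\>=N$, so $|N|=2$, and Lemma \ref{normaliser}(iii) then yields $N=Z(G)$.

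The main obstacle is this last step, since the centraliser observations only give $\<g,N\>\le C_G(t)$. To bridge the gap, I would apply the counting analysis to each non-trivial power $g^k$: such an element also fixes $\alpha$ and $t\alpha$, while $\bar g^k$ fixes exactly $\Delta_1,\Delta_2$ (as $g^k\ne 1$ combined with the first part of the lemma forces at least and at most two fixed orbits), so the formula yields $C_N(g^k)=\<t\>$ for every such $k$. Over $\mathbb{F}_2$ this implies that the action $\sigma$ of $g$ on $N$ admits a $\<\sigma\>$-invariant decomposition $N=\<t\>\oplus M$ with $\sigma$ acting fixed-point-freely on $M\setminus\{0\}$. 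Combining this with the minimality of $N$ as a $G$-normal subgroup and the fixity hypothesis applied to further elements of $G$, which must respect the pairing $\beta\mapsto t\beta$ visible on the $\bar g$-fixed points, should force $M=0$ and therefore $N=\<t\>$ as required.
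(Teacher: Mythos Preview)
Your double-counting over the coset $gN$ is correct and yields the first assertion more directly than the paper, which instead reduces to $g$ of prime order and treats the cases $r\ne p$ and $r=p$ separately via an element count in $N\langle g\rangle$. The case $c=1$ of the second assertion is also fine.

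The case $c=2$ has a genuine gap. The claimed $\langle\sigma\rangle$-invariant splitting $N=\langle t\rangle\oplus M$ need not exist: Maschke's theorem fails over $\mathbb{F}_2$, and for example if $\sigma$ acts as a transvection on $N\cong\mathbb{F}_2^2$ then $N$ is an indecomposable $\langle\sigma\rangle$-module whose only proper submodule is $\langle t\rangle$. The closing appeal to ``further elements of $G$'' and to minimality of $N$ is too vague to repair this; nothing you have established constrains elements of $G$ outside $\langle g,N\rangle$, so there is no route from here to $\langle t\rangle\unlhd G$.

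The paper's approach to the second assertion is different. Writing $H:=N\langle g\rangle$, it first disposes of the case where $H$ contains an element of prime order $r\ne p$ (such an element, hence a suitable power of $g$, fixes one point on each orbit), and otherwise $H$ is a $2$-group. In that case an involution count shows that every nontrivial element of $H$ is an involution, so $H$ is elementary abelian; then Lemma~\ref{normaliser}, applied with $H\le C_G(h)$ for $h\in H_\alpha^\#$ and $H\cap G_\alpha=H_\alpha$ of order~$2$, forces $|H|\le 4$, hence $|N|=2$ and $N=Z(G)$. Your counting actually gives a quick variant of this last step once one knows $H$ is a $2$-group: for the involution $g_0\in\langle g\rangle$, every element of $g_0N$ fixes a point (by your coset count), so its square lies in $N$ and fixes that point, hence is trivial by semi-regularity; thus $g_0$ centralises $N$, and then $|N|=|C_N(g_0)|\le 2$.
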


\begin{proof}
Assume for a contradiction that $g \in G\setminus N$ has prime order $r$ and is such that $\bar g$ fixes at least three distinct points $\bar \alpha$,
$\bar \beta$ and $\bar \gamma$
on $\bar \Omega$.
If $r \neq p$, then $g$ fixes at least one point on each $N$-orbit that it stabilises, which gives a contradiction.
Thus $r=p$ and, since $g \notin N$, we know that $H:=N\langle g \rangle$ has order $p^{n+1}$.
It follows that $|H_\alpha|=p$ and that the number of fixed points of $H_\alpha$ on $\bar \alpha$ is divisible by $p$. Then our fixity 2 hypothesis gives that $p=2$.
We conclude that each point stabiliser in $H$ has order 2 and fixes exactly two points, both in the same $N$-orbit.

The $2^n-1$ elements in $N^\#$ act fixed point freely on $\Omega$, which means that they act as a product of $\frac{|\Omega|}{2}$ transpositions, respectively.
Next we count the elements of $H$ that fix a point (and hence exactly two points) on $\bar \alpha$: These are the conjugates of $g$.
Since $H$ is a $2$-group and acts with fixity 2 on $\bar \alpha$, we know that $|Z(H)|=2$ and therefore
$C_H(g)=Z(H) \cdot \langle g \rangle$ has order 4, giving that $g$ has exactly $\frac{|H|}{4}=2^{n-1}$ conjugates in $H$.
In a similar way, we find $2^{n-1}$ elements that fix two points on $\bar \beta$ and on $\bar \gamma$, respectively.
In conclusion, we have $3 \cdot 2^{n-1}$ elements in $H$ that fix two points and $2^n-1$ elements that have no fixed points, giving
too many elements for $H$ in total.
This contradiction proves our first statement.

For the second statement we let $g \in G$ be such that $\bar g$ fixes exactly two $N$-orbits $\bar \alpha$ and $\bar \beta$. Again let $H:=N\langle g \rangle$.
First we suppose that there exists an element $h$ of prime order $r \neq p$ in $H$.
Since $|\bar \alpha|=|N|=p^n$ is not divisible by $r$, it follows that $h$ fixes at least one point. This is true for $\bar \beta$ as well, and then the fixity 2 hypothesis gives that $h$ fixes exactly one point on $\bar \alpha$ and on $\bar \beta$, respectively.
Of course $h \in H\setminus N$, which means that $h$ is $H$-conjugate into $\langle g \rangle$ and therefore $g$ fixes exactly one point on $\bar \alpha$ and $\bar \beta$, as stated.

Now suppose that $H$ is a $p$-group.
Since $H$ stabilises $\bar \alpha$, of size $|N|<|H|$, we have that $H_\alpha \neq 1$.
Let $h \in H_\alpha^\#$ be an element of order $p$. Then the number of fixed points of $h$ on $\bar \alpha$ is divisible by $p$, which forces $p=2$, and also, we see that $H_\alpha$ has exactly two fixed points, both in $\bar \alpha$, and regular orbits otherwise. This implies that $o(h)=2$ and in fact $|H_\alpha|=2$.
Since $|\bar \alpha|=N$, we conclude that $|H|=2 \cdot |N|$.
Let $\Delta:=\bar \alpha \cup\bar \beta$.
Then $H$ contains $\frac{|N|}{2}$ involutions that generate point stabilisers for the action on $\bar \alpha$ and the same number of (different!) involutions that generate point stabilisers for the action on $\bar \beta$.
Together with the involutions in $N$ this gives $2 \cdot 2^{n-1}+(2^n-1)$ involutions. This number is $2^{n+1}-1$ and exhausts the number of non-trivial elements of $H$.
Hence $H$ is elementary abelian. Lemma \ref{normaliser}, together with the fact that $N$ acts semi-regularly, gives that
$|H|=4$ and $|N|=2$. Then $N=Z(G)$, again as stated.

\end{proof}

\begin{lemma}\label{fix2sol3}
Suppose that Hypothesis \ref{2solu} holds,  that
 $N$ acts semi-regularly on~$\Omega$ and that $|\bar \Omega|=6$.
 Then $\bar G$ is not isomorphic to $\Sym_4$ or to $\Alt_4$.
\end{lemma}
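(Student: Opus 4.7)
Suppose for contradiction that $\bar G \cong \Alt_4$ or $\bar G \cong \Sym_4$. Semi-regularity of $N$ gives $G_\alpha \cong \bar G_\alpha$, a subgroup of index~$6$ in $\bar G$; this is of order~$2$ if $\bar G \cong \Alt_4$, and either cyclic of order~$4$ or a non-normal Klein four-group if $\bar G \cong \Sym_4$ (the two essentially different transitive actions of $\Sym_4$ on six points). By Lemma \ref{fix2sol2}, every element of $G$ whose image in $\bar G$ fixes two orbits of $\bar \Omega$ fixes exactly one point on each of those orbits, unless $N = Z(G)$ has order~$2$; and since the automorphism group of a group of order~$2$ is trivial, $|N|=2$ automatically forces $N \le Z(G)$. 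The analysis therefore splits into Case~I ($|N|=2$ and $N=Z(G)$) and Case~II ($|N|>2$).

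\textbf{Case II.} I would pick an involution $x \in G_\alpha$ whose image lies in the normal $V_4 \le \Alt_4 \le \bar G$: the generator if $G_\alpha$ has order~$2$, $y^2$ if $G_\alpha = \langle y \rangle$ is cyclic of order~$4$, and the unique such element if $G_\alpha$ is Klein four. Lemma \ref{normaliser}(ii) and abelianness of $G_\alpha$ give $|C_G(x)| \le 2|G_\alpha|$, which combined with $C_G(x) \supseteq C_N(x)\,G_\alpha$ and $N \cap G_\alpha = 1$ forces $|C_N(x)| \le 2$. Transitivity of $\bar G$ on $\bar\Omega$ extends this bound to every $\bar G$-conjugate of $\bar x$, hence to all three double transpositions in the normal $V_4$. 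Since $N$ is an irreducible $\bar G$-module, I would finish in two ways: for odd $p$, the bound forces $|C_N(\bar v)|=1$ for every such $\bar v$, so each double transposition inverts $N$; but in $V_4$ the product of two distinct involutions is the third, while $(-I)(-I)=I$, impossible for $N \ne 1$. For $p=2$, the normal $2$-subgroup $V_4$ of $\bar G$ acts trivially on every simple $\mathbb{F}_2[\bar G]$-module, so $|C_N(\bar v)|=|N|>2$, also impossible.

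\textbf{Case I.} Now $N \le Z(G)$ gives $G_\alpha = G_{n\alpha}$ for $n$ the non-trivial element of $N$, so any $g \in G_\alpha^\#$ fixes $\alpha$ and $n\alpha$ and, by fixity~$2$, nothing else. I split by $\bar G_\alpha$. If $\bar G_\alpha = \langle \bar y \rangle$ is cyclic of order~$4$, then $\bar y$ fixes exactly two orbits $\bar\alpha, \bar\beta$; since $y$ fixes $\bar\alpha$ pointwise, fixity~$2$ forces $y$ to swap the two points of $\bar\beta$, so the alternative lift $ny$ fixes $\bar\beta$ pointwise, and $(ny)^2=y^2$ then fixes all four points of $\bar\alpha \cup \bar\beta$, contradicting fixity~$2$. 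If $\bar G_\alpha \cong V_4$ (the non-normal Klein four-stabiliser in $\Sym_4$), then $\bar G_\alpha$ has a second fixed orbit $\bar\beta$ on $\bar\Omega$, shared by all three involutions $x_1, x_2, x_3 \in G_\alpha$; the same lifting argument places $nx_1, nx_2, nx_3$ all in $G_\beta$, but then $(nx_1)(nx_2) = x_3 \in G_\beta$, whereas $x_3 \in G_\alpha$ fixes only $\{\alpha, n\alpha\}$. Finally, for $\bar G \cong \Alt_4$ with $|G_\alpha|=2$, $G$ has order $24$ and is a central extension of $\Alt_4$ by $N$, hence $G \cong \Alt_4 \times N$ or $G \cong \mathrm{SL}_2(3)$: in $\mathrm{SL}_2(3)$ the unique involution lies in $N$, ruling out a valid semi-regular $N$, while in $\Alt_4 \times N$ the centraliser of an involution $x \in G_\alpha$ has order $8$, violating the bound $|C_G(x)| \le 4$ of Lemma \ref{normaliser}(ii).

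\textbf{Main obstacle.} The most delicate step is Case~I with $\bar G_\alpha \cong V_4$: the argument depends on the specific geometry that all three involutions of the non-normal Klein four-stabiliser share the same pair of fixed points on $\bar\Omega$. Case~II in characteristic~$2$ also needs some care, since it invokes the standard fact that a normal $p$-subgroup of $\bar G$ acts trivially on any irreducible $\mathbb{F}_p[\bar G]$-module; in characteristic $\ne 2$ the contradiction comes purely from the group-theoretic incompatibility of three simultaneous inversions in $V_4$.
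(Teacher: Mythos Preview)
Your proof is correct and takes a genuinely different route from the paper's argument.

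The paper works throughout with the full pre-image $V$ of the normal Klein four-group $\bar V \unlhd \bar G$ and splits according to whether $p = 2$ or $p$ is odd. For $p$ odd it uses coprime action of a Sylow $2$-subgroup $T \cong \bar V$ of $V$ on $N$ together with the generation formula $N = \langle C_N(t) \mid t \in T^\# \rangle$: each $t \in T^\#$ fixes a point of $\Omega$, so Lemma~\ref{normaliser}(ii) forces $C_N(t)$ into a point stabiliser, whence $C_N(t)=1$ and $N=1$. For $p = 2$ it counts involutions in $V$ directly (the $2^n-1$ fixed-point-free ones in $N$ plus $2^n$ two-point stabilisers on each of the three $V$-orbits) to force $V$ elementary abelian, and then contradicts the centre bound of Lemma~\ref{normaliser}(iii) applied to the transitive fixity-$2$ action of $V$ on one of its orbits.

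Your approach instead splits on $|N| = 2$ versus $|N| > 2$. For $|N|>2$ you bound $|C_N(\bar v)| \le 2$ for every involution $\bar v$ in the normal $V_4$ via Lemma~\ref{normaliser}(ii) and then finish module-theoretically: for odd $p$ all three $\bar v$ must invert $N$, incompatible with $\bar v_1 \bar v_2 = \bar v_3$; for $p = 2$ you invoke that $O_2(\bar G)$ acts trivially on any irreducible $\mathbb{F}_2[\bar G]$-module. For $|N| = 2$ you classify the central extensions and run a direct lifting argument for each of the three possible point-stabiliser shapes.

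The paper's argument is more self-contained and uniform in that it never needs to single out $|N|=2$, at the price of a somewhat ad hoc involution count. Your Case~II is cleaner and more conceptual, exploiting the $\bar G$-module structure of $N$, but you pay for it with the detailed casework in Case~I. One small point worth making explicit: the action of $\bar G$ on $\bar\Omega$ is faithful (so the normal $V_4$ cannot itself be a point stabiliser in the $\Sym_4$ case); this follows immediately from the fixity-at-most-$2$ conclusion of Lemma~\ref{fix2sol2}, which you already cite.
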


\begin{proof}
	Assume for a contradiction that $\bar G$ is isomorphic to $\Sym_4$ or to $\Alt_4$.
We can see, for example using \texttt{GAP} (\cite{GAP}), that there are three types of transitive subgroups of $\Sym_6$ that
are isomorphic to $\Alt_4$ or $\Sym_4$, and each time there is an elementary abelian normal subgroup of order $4$ with three orbits of size 2 and such that each involution fixes exactly two points.
Therefore we choose $N \le V \unlhd G$ such that $\bar V \unlhd \bar G$ is elementary abelian of order 4.

First assume that $p \neq 2$. Then a Sylow $2$-subgroup $T$ of $V$ is isomorphic to~$\bar V$, and it acts coprimely on $N$, which gives that $N=\langle C_N(t) \mid t \in T^\# \rangle$.
Let $t \in T^\#$. Then $\bar t$ fixes a point on $\bar \Omega$, whence $t$ stabilises an $N$-orbit. 

Since this orbit has length $p^n$ and $p$ is odd, we see that $t$ fixes a point in $\Omega$. Then the fixity \(2\) action yields that a subgroup of $C_N(t)$ of index at most \(2\), hence \(C_N(t)\) itself, is contained in a point stabiliser (see Lemma \ref{normaliser}(ii)). This is impossible because $N$ acts semi-regularly.

We deduce that $p=2$ and therefore $N$ is a $2$-group.
If $v \in V\setminus N$ and if $\bar \alpha$ is a fixed point of $\bar v$ in $\bar \Omega$, then $|N\langle v \rangle|>|N|=|\bar \alpha|$ and therefore we may suppose that $v \in V_\alpha$.
Then $v$ fixes exactly two points on $\bar \alpha$ and $\langle v \rangle=V_\alpha$ has order~$2$.
Therefore $\alpha^V$ has size $2^{n+1}$. This gives $2^n$ pairs of points that are fixed by involutions in $V$, which gives $2^n$ involutions in $V$ for each $V$-orbit, and there are at least three of them, by the first paragraph above. Moreover, there are $2^n-1$ involutions in $N \le V$, without fixed points. This gives at least $3 \cdot 2^n+2^n-1$ involutions in $V$, a group of order $2^{n+2}$, and we conclude that
$V$ is elementary abelian.
Since $V$ acts faithfully and transitively on $\alpha^V$, and with fixity $2$,  this contradicts Lemma \ref{normaliser}(iii). 
\end{proof}

\vspace{0.5cm}
\underline{Proof of Theorem \ref{solu2main}:}

Assume that the theorem is false and choose $G$ to be a minimal counterexample.
By hypothesis $G$ is non-trivial and soluble, therefore it has a non-trivial minimal normal subgroup $N$ and we may suppose that Hypothesis \ref{2solu} holds, with all its notation.
If $N$ does not act semi-regularly, then we apply Lemma \ref{fix2sol1} and we see that (2) holds. This gives Case (3) of our theorem, which is a contradiction.
Therefore $N$ acts semi-regularly on $\Omega$.

If $|\bar \Omega|=1$, then $N$ acts regularly on $\Omega$, as in (1), which gives another contradiction.

If $|\bar \Omega|=2$, then we let $F \unlhd G$ denote the kernel of the action of $G$ on $\bar \Omega$. We note that $N \le F$ and that $|G:F|=2$, because $G$ is transitive and the $N$-orbits have equal lengths.
By hypothesis there is some $g \in G$ that fixes exactly two points $\alpha, \beta \in \Omega$.
Then $g$ stabilises the $N$-orbits $\bar \alpha$ and $\bar \beta$, whereas all elements in $G \setminus F$ interchange the $N$-orbits.
Hence $g \in F_\alpha$. If $\beta \notin \bar \alpha$, then $F$ acts as a Frobenius group on its orbits, as in (2).
This is a contradiction, and therefore $\beta \in \bar \alpha$.
Then Lemma \ref{fix2sol2} implies that $N=Z(G)$ has order 2, and it follows that $|\Omega|=4$.
Now $G$ is isomorphic to a transitive subgroup of $\Sym_4$, with centre of order 2, and this only leaves the possibility $G \cong D_8$. In particular $G$ has a regular normal subgroup, which gives (1).
Again, this case is impossible, and we continue with the case where $|\bar \Omega| \ge 3$.

Lemma \ref{fix2sol2} gives that $\bar G$ is transitive and acts with fixity at most 2 on $\bar \Omega$.
Let $\alpha \in \Omega$. Then $G_\alpha \neq 1$, and $G_\alpha$ stabilises $\bar \alpha$. Therefore $\bar G$ does not act regularly on $\bar \Omega$.
If $\bar G$ acts as a Frobenius group, with Frobenius kernel $\bar F$, and if $F \unlhd G$ is its full pre-image in $G$, then $F$ acts regularly on $\Omega$.
This is (1) again, which gives a contradiction.
Consequently $\bar G$ acts with fixity 2 on $\bar \Omega$.
We recall that $G$ is a minimal counterexample, which means that $\bar G$ satisfies one of the cases (1), (2) or (3).
Case (3) does not hold because of Lemma \ref{fix2sol3}.
If Case (1) holds, then the full pre-image of a regular normal subgroup of $\bar G$ is a regular normal subgroup of $G$, as in (1). This is not possible.
Therefore Case (2) holds, and we let $F \unlhd G$ be such that $\bar F$ acts as a Frobenius group on its two orbits
$\bar \Omega_1$ and $\bar \Omega_2$.
Let $\bar K$ be the Frobenius kernel of $\bar F$ in this action, and let $\bar H$ be a Frobenius complement.
Then $|\bar K|=|\bar \Omega_1|$ is coprime to $|\bar H|$. Let $a \in G$ be such that $\bar a \in \bar H$.
If $o(a)$ divides $|\bar \Omega_1|$, then
$o(\bar a)$, which is a divisor of $o(a)$, also does, and this contradicts the fact that
$|\bar H|$ and $|\bar K|$ have coprime orders, because they are a complement and the kernel of a Frobenius group. 
Therefore $o(a)$ does not divide $|\bar \Omega_1|$.
The element $a$ stabilises $\bar \Omega_1$, and it cannot act semi-regularly on $\bar \Omega_1$,
hence it must fix a point. The same holds for $\bar \Omega_2$, and then the fixity 2 hypothesis gives that $a$ fixes exactly two points in $\Omega$, one on each $F$-orbit.
Once more, this leads to Case (2), and this is our final contradiction.\\

\section{Soluble groups that act with fixity 3}

\begin{hyp}\label{3solu}
Suppose that Hypothesis \ref{hypkfix} holds and that $k=3$.
Suppose further that $N$ is a minimal normal subgroup of $G$ and let $p$ be a prime and $n \in \N$ be such that $|N|=p^n$.
We let $\bar \Omega$ denote the set of $N$-orbits on $\Omega$ and we set $\bar G:=G/N$.
\end{hyp}

\begin{lemma}\label{fix3sol1}
Suppose that Hypothesis \ref{3solu} holds.
Then $N$ acts semi-regularly on $\Omega$ or the following is true:

$|\Omega|=6$, $|N| = 9$, with $N$-orbits of length $3$, $G$ has structure $(C_3 \times C_3) : C_4$ and the point
stabilisers are isomorphic to $\Sym_3$.
\end{lemma}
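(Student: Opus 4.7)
The plan is to assume $N$ is not semi-regular and recover the stated data. Pick $\alpha \in \Omega$ with $N_\alpha \ne 1$; since $N$ is abelian, $N_\alpha$ fixes $\alpha^N$ pointwise, so fixity $3$ forces $|\alpha^N| \in \{2, 3\}$ and hence $p \in \{2, 3\}$. I would rule out $p = 2$ by showing that the fixity must then be at most $2$: every non-identity element of $N$ fixes $0$ or $2$ points (distinct orbits of length $2$ must have distinct stabilisers, else $4$ fixed points), and an element of $G \setminus N$ can fix at most one such orbit pointwise, giving $\leq 2$ fixed points. This contradicts $k = 3$.

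So $p = 3$. Transitivity of $G$ on $N$-orbits together with non-semi-regularity force every $N$-orbit to have length $3$, so $|\Omega| = 3m$. The $m$ distinct point stabilisers in $N$ are index-$3$ subgroups with pairwise disjoint non-identity elements (else over-fixing). For $|N| \ge 27$, any two distinct index-$3$ subgroups of $N$ would share a non-identity element (their intersection has order at least $3^{n-2} \ge 3$), contradicting disjointness; so $|N| = 9$. Counting non-identity elements in $N$ gives $m \le 4$, and one has $m \ge 2$ since faithfulness of $N$ on $\Omega$ requires $\cap_i N_{\alpha_i} = 1$, which fails for $m = 1$. The case $m = 3$ is excluded because the fourth subgroup of order $3$ of $N$ would be $G$-invariant, violating minimal normality.

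The main obstacle is $m = 4$. Here $G_\alpha / N_\alpha$ embeds into the stabiliser of $\alpha$ in $\Sym(\alpha^N)$, so $|G_\alpha| \le 6$ and $|\bar G| \le 8$. Since all four subgroups of order $3$ of $N$ occur as stabilisers and $N$ is minimal normal, $G/C_G(N)$ acts irreducibly and transitively on these four lines; a case analysis in $\mathrm{GL}_2(3)$ gives only $G/C_G(N) \cong Q_8$ or $C_8$, both of order $8$, whence $C_G(N) = N$ and $\bar G = G/C_G(N)$. Each has a unique involution, the image of $-I$, and a direct computation using $z^{-1}nz = n^{-1}$ on the relevant coset $zN$ shows it consists of $9$ involutions. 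Each $G_\alpha$ is then $N_\alpha \rtimes C_2 \cong \Sym_3$ with $3$ involutions, so the double count of (involution, fixed-point) pairs yields $|\Omega| \cdot 3 = 36 \le 9 \cdot 3 = 27$, a contradiction.

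Therefore $m = 2$ and $|\Omega| = 6$. Then $G/C_G(N)$ has orbit structure $\{2, 2\}$ on the four lines of $N$, is irreducible, and satisfies $|G/C_G(N)| \le |\bar G| \le 4$; the only possibility inside $\mathrm{GL}_2(3)$ is $G/C_G(N) \cong C_4$, generated by a rotation matrix. Hence $C_G(N) = N$, $G \cong (C_3 \times C_3):C_4$ and $|G| = 36$. The unique element of order $2$ in $C_4$ maps to $-I$ and hence inverts $N_\alpha$, yielding $G_\alpha = N_\alpha \rtimes C_2 \cong \Sym_3$.
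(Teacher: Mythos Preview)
Your overall strategy is sound and in several places cleaner than the paper's: the parity argument disposing of $p=2$ is neat (every element of $G$ has an even number of fixed points once all $N$-orbits have length~$2$), and your elimination of $m=3$ via the fourth line of $N$ being $G$-invariant is a genuine improvement over the paper, which simply invokes \texttt{GAP} and Lemma~2.7 of \cite{MW3} for the small cases. For $m=2$ your $\GL_2(3)$ analysis is correct, and the crucial bound $|G_\alpha|\le 6$ does hold there: the pointwise stabiliser $K_\alpha$ of $\bar\alpha$ acts freely on the remaining $3$ points, so $|K_\alpha|\mid 3$ and hence $K_\alpha=N_\alpha$.

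The gap is in the case $m=4$. Your claim that ``$G_\alpha/N_\alpha$ embeds into the stabiliser of $\alpha$ in $\Sym(\alpha^N)$'' amounts to asserting that the kernel $K_\alpha$ of the action of $G_\alpha$ on $\bar\alpha$ equals $N_\alpha$. But here $K_\alpha$ only acts freely on $\Omega\setminus\bar\alpha$, a set of size~$9$, so a priori $|K_\alpha|\in\{3,9\}$; you have not excluded $|K_\alpha|=9$. If $|K_\alpha|=9$, then $K_\alpha/N_\alpha$ has order~$3$ and its image in $\GL_2(3)$ is a transvection subgroup fixing the line $N_\alpha$ and cycling the other three. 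Since $G$ is transitive on the four $N$-orbits, all four transvection subgroups appear, forcing $\SL_2(3)\le G/C_G(N)$. In that scenario $|G_\alpha|$ can be as large as $18$ and $|\bar G|$ as large as $24$, so your restriction to $Q_8$ or $C_8$ collapses and the subsequent involution count no longer applies. (A smaller point: even granting your setup, the assertion that the coset $zN$ consists of involutions needs the extension to provide an involution above $-I$; you compute $(zn)^2=z^2$, so either all nine coset elements are involutions or none are, and the latter must be ruled out.)

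So to complete the $m=4$ case by hand you must first show $K_\alpha=N_\alpha$, or else extend your $\GL_2(3)$ case analysis to cover $\SL_2(3)$ and $\GL_2(3)$. The paper itself does not do this by hand: it observes that the kernel $K$ of the action on $\bar\Omega$ is a normal $3$-subgroup and then finishes $|\Omega|=12$ with \texttt{GAP}.
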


\begin{proof}
We suppose that $\alpha \in \Omega$ is such that $N_\alpha \neq 1$.
If $N=N_\alpha$, then the transitive action of $G$ on $\Omega$ forces $N$ to fix all points in $\Omega$, which is impossible.
Therefore $\bar \alpha:=\alpha^N=|N:N_\alpha|$ has size at least $2$, and it is a power of $p$ because $N$ is a $p$-group.
Now the number of fixed points of $N_\alpha$ on $\bar \alpha$ is divisible by $p$,
which by our fixity hypothesis forces $p\in \{2,3\}$.

Case 1: $p=2$.

First we notice that, by Lemma \ref{normaliser}(ii), this implies that $|N| \ge 4$ and $N \cap Z(G)=1$.
Also, $N$ is an abelian $2$-group that acts with fixity $2$ on $\bar \alpha$.
Using Lemma \ref{normaliser}(ii) this is possible only if $|\bar \alpha|=2$. Since $G$ acts with fixity 3, there must be an element $\beta \in \Omega \setminus \bar \alpha$.
The transitive action of $G$ yields that $N_\beta$ is conjugate to $N_\alpha$, hence it fixes two points on $\bar \beta:=\beta^N$.
$N_\alpha$ and $N_\beta$ both have index 2 in $N$, and the fixity 3 hypothesis forces $N_\alpha \cap N_\beta=1$, which means that
$|N|=4$ and $|\Omega| \in \{4,6\}$.
First assume that $|N|=4$. Then $|\Omega|=6$ because otherwise any element of $G$ with three fixed points fixes everything.
Checking all possibilities for subgroups of $\Sym_6$ that act with fixity 3 (for example with \texttt{GAP} (\cite{GAP}) or using Lemma 2.7 in \cite{MW3}), we see that this does not occur, and hence this case is impossible.

Case 2: $p=3$.

Then similar arguments give that $|\bar \alpha|=3$, that $N_\alpha$ fixes three points on $\bar \alpha$ and that there exists an element
$\beta \in \Omega \setminus \bar \alpha$.
Now
$N_\alpha$ and $N_\beta$ both have index 3 in $N$, hence $N_\alpha \cap N_\beta=1$, which means that
$|N|=9$ and $|\Omega| \in \{6,9,12\}$.

If $|\Omega|=6$, then we can apply Lemma 2.7 in \cite{MW3}, where Case (3) must hold and gives exactly the second possibility in the lemma.

If $|\Omega|=9$, then $N$ has three orbits of size 3 and therefore it contains three subgroups of order 3 that each fix exactly one of these orbits point-wise. These three point stabilisers contain six non-trivial elements in total, which means that we can choose
$x \in N$ in such a way that it does not fix any point on $\Omega$. In particular $\langle x\rangle$ is a subgroup of $N$ of order 3, and with its fixed point profile it cannot be conjugate to any other subgroup of $N$ of order 3. This contradicts the fact that
$N$ is a minimal normal subgroup of $G$.

Finally, we assume that $|N|=9$ and $|\Omega|=12$. In particular $N$ has exactly four orbits on $\Omega$, each of size 3, and $G$ is isomorphic to a transitive subgroup of $\Sym_{12}$ with a minimal normal subgroup of order $9$.
We denote the $N$-orbits by $\bar \alpha, \bar \beta$, $\bar \gamma$ and $\bar \delta$ and we let $K$ denote the kernel of the action of $G$ on $\bar \Omega$. Then the fixity 3 hypothesis gives that each element in $K^\#$ must act non-trivially on at least three of the four elements of $\bar \Omega$, and at the same time $N \le K$.
Assume that $x\in K$ has prime order, but not order $3$. Then $x$ fixes at least one point on each $N$-orbit, which contradicts Hypothesis \ref{3solu}.
Thus $K$ is a normal $3$-subgroup of $G$.
This case can then be finished using \texttt{GAP} (\cite{GAP}). 
\end{proof}

The second case of this lemma does actually occur and is therefore mentioned as Case (3) of Theorem \ref{fix3solu}.
In the group $\Alt_6$, there is a subgroup with 
the given structure and action on $\{1,...,6\}$, namely $\langle (1,2,3), (2,5,3,6)(1,4)\rangle$ (with minimal normal subgroup $\langle (1,2,3),(4,5,6)\rangle$). 
From now on, we will therefore suppose that Hypothesis \ref{3solu} holds and that $N$ is semi-regular.

\begin{lemma}\label{fix3sol1a}
Suppose that Hypothesis \ref{3solu} holds and that $N$ acts semi-regularly.
Then $\bar G$ acts with fixity at most 3 on $\bar \Omega$.
\end{lemma}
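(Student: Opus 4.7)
My plan is to argue by contradiction, in the spirit of Lemma \ref{fix2sol2}. Suppose for a contradiction that some $\bar g \in \bar G$ fixes $L \ge 4$ distinct orbits $\bar \alpha_1, \ldots, \bar \alpha_L$ on $\bar \Omega$. Replacing $\bar g$ by a suitable power I may assume that $\bar g$ has prime order $r$, since the fixed set only grows under taking powers. Lift $\bar g$ to $g \in G$ and let $H := N\langle g \rangle$. Then $H/N = \langle \bar g \rangle$ has order $r$, so $|H| = r p^n$. Because $N$ is semi-regular and transitive on each $\bar \alpha_j$, so is $H$; in particular $|H_\alpha| = r$ and $H_\alpha \cap N = 1$ for every $\alpha \in \bar \alpha_j$, so every non-identity element of $H_\alpha$ has order $r$ and maps to a generator of $\langle \bar g \rangle$.

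In the case $r \neq p$, pick any $\alpha \in \bar \alpha_1$ and any $g' \in H_\alpha^\#$. Since $\bar{g'}$ generates $\langle \bar g \rangle$, the element $g'$ stabilises every $\bar \alpha_j$. On each such orbit the orbits of $\langle g' \rangle$ have length $1$ or $r$, and $\gcd(r, p^n) = 1$ forces at least one fixed point. Hence $g'$ has at least $L \ge 4$ fixed points on $\Omega$, contradicting fixity $3$.

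In the case $r = p$, we have $|H| = p^{n+1}$ and each $H_\alpha$ has order $p$. For $\alpha \in \bar \alpha_j$, the number of fixed points of $H_\alpha$ on $\bar \alpha_j$ is a positive multiple of $p$ and at most $3$, which forces $p \in \{2,3\}$ and exactly $p$ fixed points on $\bar \alpha_j$. Since $2p > 3$, no element of $H$ can have fixed points in two distinct orbits $\bar \alpha_j$. Within a single orbit $\bar \alpha_j$, the $p^n$ points partition into $p^{n-1}$ blocks of size $p$, each being the common fixed set of a stabiliser subgroup of order $p$; distinct such subgroups intersect trivially by prime order, giving $(p-1) p^{n-1}$ distinct non-identity elements of $H$ that have a fixed point in $\bar \alpha_j$. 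Taken together across the $L$ orbits, and together with the $p^n - 1$ fixed-point-free elements of $N^\#$ (disjoint from the above since $H_\alpha \cap N = 1$), this yields the inequality
\[
(p^n - 1) + L \cdot (p-1) p^{n-1} \;\le\; |H| - 1 \;=\; p^{n+1} - 1,
\]
which simplifies to $L(p-1) \le p(p-1)$ and hence $L \le p \le 3$, contradicting $L \ge 4$.

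The delicate point will be the bookkeeping in the $r = p$ case: one must verify that point stabilisers for different base points within the same $N$-orbit really do partition the orbit, that the elements counted for different $\bar \alpha_j$'s are pairwise disjoint, and that none of them lies in $N$. All three facts are forced by fixity $3$ and the semi-regularity of $N$, but they need to be spelt out explicitly for the counting inequality to be justified.
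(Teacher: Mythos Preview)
Your proof is correct and follows essentially the same approach as the paper's: a contradiction via a prime-order element $\bar g$, the case split $r \neq p$ versus $r = p$, the coprimality/fixed-point argument in the former case, and the element-count in $H = N\langle g\rangle$ in the latter. Your presentation is slightly more explicit---you keep $L$ general and derive $L \le p$ from the inequality, whereas the paper simply plugs in four orbits and observes that the count overshoots $|H|$---but the underlying argument is identical.
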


\begin{proof}
Assume for a contradiction that $g \in G$ is such that $\bar g$ has prime order $r$ and fixes at least four points $\bar \alpha$, $\bar \beta$, $\bar \gamma$ and $\bar \delta$ on $\bar \Omega$.
Let $H \le G$ be the full pre-image of $\langle \bar g \rangle$. If $r \neq p$, then we let $h \in H$ be an element of order $r$ and
we note that $r$ does not divide the lengths of the orbit $\bar \alpha$, which means that $h$ must fix a point on it.
The same holds for the other fixed points of $\bar g$ on $\Omega$, which gives at least four fixed points of $h$ on $a$ and contradicts our fixity 3 hypothesis.

Therefore $r=p$ and we see that $H$ has order $p^{n+1}$.
We recall that $H$ stabilises the $N$-orbits $\bar \alpha$,...,$\bar \delta$ of size $p^n$, which implies that $H$ does not act regularly on these orbits. In fact, it follows that the point stabilisers in $H$ have order $p$ and that they each fix exactly p points, all in one $N$-orbit, respectively. Together with our fixity 3 hypothesis this forces $p \in \{2,3\}$, and there are $4 \cdot p^{n-1}$ possibilities for such point stabilisers in $H$.
Each of them has order $p$, hence contains $p-1$ non-trivial elements, and this gives us 
$(p-1) \cdot 4 \cdot p^{n-1}$ elements of order $p$ in $H$ that each fix $p$ points.
Moreover, we have $p^n-1$ elements of order $p$ that do not have any fixed points, coming from $N$.
In sum this gives too many elements of order $p$ in $H$, which means that we have a contradiction.
\end{proof}

\begin{lemma}\label{fix3sol1b}
Suppose that Hypothesis \ref{3solu} holds, that $N$ is semi-regular, and that
 $g \in G^\#$ and $\alpha \in \Omega$ are such that $g$ fixes exactly three points in $\bar \alpha$.
Then one of the following holds:

(1) $\bar \alpha$ is the unique $N$-orbit that is stabilised by $g$, or

(2) $|N| \in \{3,9\}$. 
\end{lemma}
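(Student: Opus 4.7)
I will assume for contradiction that $g$ stabilises another $N$-orbit $\bar\beta \neq \bar\alpha$, and then show this forces $|N| \in \{3,9\}$. Since $g$ fixes exactly three points in $\bar\alpha$ and $G$ acts with fixity $3$, $g$ has no fixed points in $\bar\beta$.

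The first step will be to reduce to the case where $g$ has order $3$ and $p = 3$. For any prime $r$ dividing $|g|$, the power $g^{|g|/r}$ has order $r$, still fixes the three fixed points of $g$ in $\bar\alpha$ (exactly three, by fixity $3$), and stabilises $\bar\beta$; if $r \neq p$, it would act coprimely on the $p^n$-set $\bar\beta$ and must fix a point there, contradicting fixity $3$. Hence $|g|$ is a $p$-power, and replacing $g$ by $g^{|g|/p}$ I may assume $|g| = p$. The fixed-point count of $g$ in $\bar\alpha$ is then simultaneously equal to $3$ and $\equiv |\bar\alpha| \equiv 0 \pmod p$, forcing $p = 3$.

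Next I will study $H := N\langle g\rangle$, a $3$-group of order $3^{n+1}$ (note $g \notin N$, since $N$ is semi-regular while $g$ has fixed points). A short computation in the semidirect product $H = N \rtimes \langle g \rangle$, essentially as in Lemma \ref{fix3sol1a}, identifies the fixed points of $g$ in $\bar\alpha = N\alpha$ with $C_N(g) \cdot \alpha$, giving $|C_N(g)| = 3$. Since $N$ is elementary abelian of order $3^n$ and $g$ acts on $N \cong \mathbb{F}_3^n$ via a matrix $A$ with $A^3 = I$, we have $(A - I)^3 = 0$ over $\mathbb{F}_3$, and $\dim \ker(A - I) = 1$ tells me that $A$ consists of a single Jordan block of size $n$. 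Thus $n \leq 3$, and the only remaining task is to exclude $n = 3$.

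Suppose $n = 3$, so $|N| = 27$ and $|H| = 81$. I will double-count the order-$3$ subgroups of $H$ outside $N$. The characteristic-$3$ identity $1 + A + A^2 = (A - I)^2$ gives $(ng^i)^3 = (A - I)^2 n$ for $n \in N$ and $i \in \{1,2\}$, so the order-$3$ elements in the coset $g^i N$ are exactly those with $n \in \ker (A - I)^2$, a two-dimensional subspace of size $9$. This produces $18$ elements of order $3$ in $H \setminus N$, hence at most $9$ order-$3$ subgroups of $H$ that meet $N$ trivially. On the other hand, for each $\gamma \in \bar\alpha \cup \bar\beta$ the stabiliser $H_\gamma$ has order $3$, lies outside $N$ by semi-regularity, and (by fixity $3$ together with the congruence $|\textrm{fix}_{\bar\alpha}(H_\gamma)| \equiv |\bar\alpha| \pmod 3$) fixes exactly three points in the orbit containing $\gamma$. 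Thus the action on $\bar\alpha$ yields $|\bar\alpha|/3 = 9$ distinct such subgroups, and similarly $\bar\beta$ yields $9$ more; the two families are disjoint, since a subgroup lying in both would fix $3 + 3 = 6 > 3$ points. This exhibits at least $18$ order-$3$ subgroups outside $N$, contradicting the bound $9$.

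The main obstacle is this final double-count: one needs the explicit cube formula in the semidirect product to obtain a sharp upper bound on order-$3$ subgroups outside $N$, and a matching lower bound by correctly identifying the stabilisers of the two distinct orbits as disjoint families of order-$3$ subgroups. Once this contradiction is in place, $n \leq 2$ and the lemma follows.
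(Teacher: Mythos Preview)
Your proof is correct and takes a genuinely different route from the paper's.

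Both arguments begin the same way: reduce to $g$ of prime order, observe that $g$ must act fixed-point-freely on $\bar\beta$ while fixing three points in $\bar\alpha$, and conclude $r=p=3$. From there the paper sets $P:=N\langle g\rangle$, assumes $|N|\ge 27$, invokes an external structural result (Lemma~2.20 of \cite{MW3}) to force $P$ to have maximal class, rules out $|P|=81$ by a \texttt{GAP} check, and for $|P|\ge 3^5$ passes to a maximal subgroup $M\ni g$ and argues by an element count that $M$ must have exponent~$3$ and class at least~$3$, which is impossible. Your argument instead exploits the linear action of $g$ on $N\cong\mathbb{F}_3^{\,n}$ directly: the identification of $\textrm{fix}_{\bar\alpha}(g)$ with $C_N(g)\cdot\alpha$ gives $\dim\ker(A-I)=1$, and since $(A-I)^3=0$ this forces a single Jordan block, hence $n\le 3$. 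For $n=3$ you carry out an explicit double count, using the cube formula $(ng^i)^3=(A-I)^2n$ to see there are only $9$ order-$3$ subgroups of $H$ outside $N$, while the point stabilisers on $\bar\alpha$ and $\bar\beta$ produce $9+9=18$ such subgroups.

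What your approach buys is self-containment: it avoids both the maximal-class lemma from \cite{MW3} and the computer check, replacing them with a short linear-algebra observation and a clean counting argument. The paper's route, on the other hand, fits into the framework of $p$-groups of maximal class already developed in \cite{MW3}, and its concluding step (exponent~$3$ forces small class) handles all $n\ge 3$ uniformly rather than just the critical case $n=3$. Either way, one small point worth making explicit in your write-up: when you identify $\textrm{fix}_{\bar\alpha}(g)$ with $C_N(g)\cdot\alpha$, you are tacitly choosing $\alpha$ to be one of the three fixed points of $g$; this is harmless but deserves a word.
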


\begin{proof}
Without loss $g$ has prime order $r$, by our fixity 3 hypothesis, and we suppose that (1) does not hold.
Then let $\beta \in \Omega$ be such that $\bar \beta$ is another $N$-orbit that is stabilised by $g$. Our fixity 3 hypothesis implies that $g$ acts without fixed points on $\bar \beta$.
Both orbits have length $|N|$ because $N$ acts semi-regularly, and now we see that
$|N|=|\bar \alpha|=p^n \equiv 3$ modulo $r$, but at the same time $r$ divides $|N|$.
Therefore $r=p=3$.
Let $P:=N \cdot \langle g \rangle$, which is a $3$-group, and assume for a contradiction that 
$|N| \ge 27$.
Then $|P| \ge 81$ and $P$ acts
transitively and with fixity 3 on $\bar \alpha$.
Then Lemma 2.20 in \cite{MW3} implies that $P$ has maximal class, with point stabilisers of order $3$ that fix exactly three points on $\bar \alpha$.

We can check with \texttt{GAP} (\cite{GAP}) that this is not possible if $|P|=81$.
Hence $|P| \ge 3^5$. We let $g \in M \le P$ be such that $|P:M|=3$, in particular $M \unlhd P$ and
$M$ is not transitive on $\bar \alpha$ because $g \in M$.
Now $\bar \alpha$ consists of three $M$-orbits, $N \cap M \unlhd M$ has index $3$ in $M$ and we can argue as above,
counting elements of order 3 that act fixed-point-freely or that fix exactly three points on $\bar \alpha$.
Then it follows that $M$ has exponent 3 and class at least 3, which is impossible.
\end{proof}

\begin{lemma}\label{fix3sol1c}
Suppose that Hypothesis \ref{3solu} holds, that $N$ is semi-regular and that
 $g \in G^\#$ fixes exactly three points $\alpha, \beta, \gamma$ in $\Omega$.
Then one of the following holds:

(1) $\beta,\gamma \in \bar \alpha$ or

(2) $\bar \alpha, \bar \beta$ and $\bar \gamma$ are pairwise distinct, which means that $\bar g$ fixes three points in $\bar \Omega$.
\end{lemma}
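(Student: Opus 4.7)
The plan is to rule out the only remaining configuration, namely that two of the three fixed points of $g$ lie in one $N$-orbit while the third lies in another. After relabelling $\{\alpha,\beta,\gamma\}$, we may assume $g$ fixes exactly two points in $\bar\alpha$ (namely $\alpha$ and $\beta$) and exactly one point in some other $N$-orbit $\bar\gamma$ (namely $\gamma$ itself). Any non-trivial power of $g$ still fixes $\alpha,\beta,\gamma$, hence exactly these three points by the fixity~3 hypothesis, so we may also assume that $g$ has prime order $r$.

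First I would eliminate $r \neq p$ by arithmetic: in that case every non-trivial $\langle g \rangle$-orbit on $\bar\alpha$ or on $\bar\gamma$ has length $r$, forcing $p^n = |\bar\alpha| \equiv 2 \pmod{r}$ and $p^n = |\bar\gamma| \equiv 1 \pmod{r}$, which is impossible. Hence $r = p$. Setting $H := N\langle g \rangle$, we have $|H| = p^{n+1}$, and $H$ acts transitively on each of $\bar\alpha$ and $\bar\gamma$ because $N$ already does; since $N$ is semi-regular, the $H$-point-stabilisers on these two orbits have order exactly $p$.

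The finishing blow is the standard $p$-group fixed-point count. The subgroup $H_\alpha$ of order $p$ acts on $\bar\alpha$, a set of size $p^n$, and fixes at least one point, so by a congruence modulo $p$ it must fix a multiple of $p$ points, hence at least $p$. Since $\langle g \rangle$ has order $p$ and fixes $\alpha$, we have $\langle g \rangle = H_\alpha$, so $g$ fixes at least $p$ points in $\bar\alpha$; the same argument applied at $\gamma$ gives $\langle g \rangle = H_\gamma$ and at least $p$ fixed points of $g$ in $\bar\gamma$. Therefore $g$ has at least $2p \geq 4$ fixed points in $\Omega$, contradicting the fixity~3 hypothesis and completing the proof.

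The argument offers no real obstacle: it is an arithmetic squeeze in the same spirit as Lemmas \ref{fix3sol1a} and \ref{fix3sol1b}, and needs no further structural input about $G$ beyond semi-regularity of $N$.
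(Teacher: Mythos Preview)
Your proof is correct, and the congruence argument you give for $r\neq p$ is exactly the paper's proof. The paper, however, does not split into cases: the fact that every non-trivial $\langle g\rangle$-orbit has length $r$ is simply a consequence of $g$ having prime order $r$ and does not use $r\neq p$. Hence one gets $p^n\equiv 2\pmod r$ from the $N$-orbit with two fixed points and $p^n\equiv 1\pmod r$ from the $N$-orbit with one fixed point, and these together force $1\equiv 2\pmod r$, which is impossible for every prime $r$, including $r=p$. Your detour through $H=N\langle g\rangle$ and the $p$-group fixed-point count in the case $r=p$ is therefore sound but unnecessary; the paper's argument finishes in one stroke.
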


\begin{proof}
Suppose that (1) does not hold, without loss $\beta \notin \bar \alpha$, and suppose that $g$ has prime order $r$.
Then $g$ stabilises $\bar \beta$ as well, and it fixes one or two points on $\bar \alpha$, 
because of our global fixity 3 hypothesis. This means that
$|N|=|\bar \alpha| \equiv 1$ or $2$ modulo $r$.
If $\gamma \in \bar \alpha$ or $\gamma \in \bar \beta$, then $g$ has a unique fixed point on one of the orbits and two on the other, and hence $|N|$ is congruent to 1 and 2 modulo r at the same time. This is impossible.
Thus $\gamma \notin \bar \alpha \cup \bar \beta$, which gives (2).
\end{proof}

For the next lemma we recall that, if $k,l \in \N_0$ and a group acts as a $(k,l)$-group on a set, then this means that all non-trivial group elements have either exactly $k$ fixed points or exactly $l$ fixed points.

\begin{lemma}\label{fix3sol1d}
Suppose that Hypothesis \ref{3solu} holds, that $N$ is semi-regular, that $|\bar \Omega| \ge 2$ and that
$\bar G$ acts as a (0,2)-group on $\bar \Omega$.
Then $|N| \in \{3,9\}$.
\end{lemma}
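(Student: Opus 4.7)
My plan is to reduce Lemma \ref{fix3sol1d} to Lemma \ref{fix3sol1b} by exhibiting a single element $g \in G^\#$ that fixes exactly three points all lying in one $N$-orbit $\bar\alpha$ and that also stabilises a second $N$-orbit different from $\bar\alpha$. Once such a $g$ is produced, Lemma \ref{fix3sol1b} will deliver the conclusion.

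To build $g$, I would exploit the fixity $3$ hypothesis to pick $g \in G^\#$ with exactly three fixed points $\alpha, \beta, \gamma$ on $\Omega$. Since $N$ acts semi-regularly, every non-trivial element of $N$ is fixed-point-free, so $g \notin N$ and hence $\bar g$ is a non-trivial element of $\bar G$. The image $\bar g$ stabilises at least the $N$-orbit $\bar\alpha$, and the hypotheses $|\bar \Omega|\ge 2$ together with $\bar G$ being a $(0,2)$-group then force $\bar g$ to fix exactly two points of $\bar\Omega$.

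Next I would apply Lemma \ref{fix3sol1c} to $g$. Its conclusion (2) would require $\bar g$ to stabilise three pairwise distinct $N$-orbits $\bar\alpha, \bar\beta, \bar\gamma$, contradicting the fact that $\bar g$ has only two fixed points on $\bar\Omega$. Therefore conclusion (1) of Lemma \ref{fix3sol1c} must hold, meaning $\beta, \gamma \in \bar\alpha$, so that $g$ fixes exactly three points in $\bar\alpha$. The second fixed point of $\bar g$ on $\bar\Omega$ is an $N$-orbit $\bar\delta \neq \bar\alpha$ that is also stabilised by $g$, so $\bar\alpha$ is not the unique $N$-orbit stabilised by $g$. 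Lemma \ref{fix3sol1b} then forces $|N| \in \{3,9\}$.

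I do not expect a substantial technical obstacle here, because the heavy lifting has already been carried out in Lemmas \ref{fix3sol1b} and \ref{fix3sol1c}. The only real observation is that the $(0,2)$-hypothesis on $\bar\Omega$ serves two roles simultaneously: via Lemma \ref{fix3sol1c} it collapses the three fixed points of $g$ into a single $N$-orbit, while at the same time guaranteeing a second stabilised $N$-orbit that prevents the trivial alternative (1) of Lemma \ref{fix3sol1b}.
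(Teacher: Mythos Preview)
Your proposal is correct and follows essentially the same route as the paper: pick an element $g$ exhibiting fixity~3, use the $(0,2)$-hypothesis on $\bar G$ to force $\bar g$ to have exactly two fixed $N$-orbits, invoke Lemma~\ref{fix3sol1c} to collapse the three fixed points of $g$ into a single $N$-orbit, and then apply Lemma~\ref{fix3sol1b}. The only addition you make is the explicit remark that $g\notin N$ (hence $\bar g\neq 1$), which the paper leaves implicit.
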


\begin{proof}
By Hypothesis \ref{3solu} we have some element $g \in G^\#$ that fixes exactly three points on $\Omega$.
Then the corresponding $N$-orbits are stabilised by $g$, which means that $\bar g$ fixes exactly 
two points $\bar \alpha$ and $\bar \beta$ of $\bar \Omega$.
Then Lemma \ref{fix3sol1c} gives that the three fixed points of $g$ in $\Omega$ are all contained in one $N$-orbit 
(which is possibility (1) in the lemma).
Then Lemma \ref{fix3sol1b} is applicable, where (1) does not hold, and (2) is exactly our statement.
\end{proof}

\begin{lemma}\label{fix3solReg}
Suppose that Hypothesis \ref{3solu} holds. Then every one of the following conditions guarantees that $G$ has a regular normal subgroup:

(i) $|N|=3$ and $|\Omega|=9$.

(ii) $|\bar \Omega|\le 2$, including the cases $|N| \in \{3,9\}$ and $|\Omega| \in \{6,18\}$.

(iii) $N$ is semi-regular and $\bar G$ has a regular normal subgroup with respect to the action on $\bar \Omega$. 

\end{lemma}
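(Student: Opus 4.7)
The plan is to prove (iii) first as a general pullback tool, then to reduce (i) and (ii) to it via short fixity-based arguments and, where needed, small-case verification in \texttt{GAP}.

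For (iii), I would take $\bar F \unlhd \bar G$ acting regularly on $\bar\Omega$ and let $F$ be its full preimage in $G$. Then $F \unlhd G$ is transitive on $\Omega$, because $\bar F$ is transitive on $\bar\Omega$ while $N$ is transitive on each $N$-orbit, and $|F| = |N|\cdot|\bar\Omega| = |\Omega|$. For semi-regularity: if $f \in F^\#$ fixes $\alpha \in \Omega$, then $\bar f$ fixes $\bar\alpha$, so $\bar f = \bar 1$ by the regularity of $\bar F$, and hence $f \in N^\#$. This contradicts the semi-regularity of $N$, so $F$ is semi-regular and, being transitive, regular.

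For (i), Lemma \ref{fix3sol1} forces $N$ to be semi-regular since its exceptional case has $|\Omega|=6 \ne 9$; so $|\bar\Omega|=3$. The image of $\bar G$ in $\Sym(\bar\Omega)$ is a transitive subgroup of $\Sym_3$, hence $C_3$ or $\Sym_3$, and in either case the unique $C_3$ is a normal regular subgroup. If $\bar G$ acts faithfully on $\bar\Omega$, pulling this $C_3$ back to $\bar G$ yields a regular normal subgroup and (iii) applies. Otherwise there exists $g \in G\setminus N$ stabilising all three $N$-orbits; the fixity-3 hypothesis tightly restricts its cycle pattern on the three size-$3$ orbits (for $g$ of order $2$, a transposition on each orbit; for $g$ of order $3$, either fixing one orbit pointwise and $3$-cycling the other two, or $3$-cycling all three), and a count of such elements inside the appropriate $\{2,3\}$-subgroup of $G$ together with the fixed-point-free elements of $N$ yields a contradiction. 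Equivalently, and most efficiently, one invokes \texttt{GAP} on the short list of soluble transitive fixity-$3$ subgroups of $\Sym_9$ with $|N|=3$.

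For (ii), $|\bar\Omega|=1$ gives $N$ transitive and semi-regular, hence regular. For $|\bar\Omega|=2$, let $F$ be the kernel of $G \to \Sym(\bar\Omega)$, so $|G:F|=2$ and $N \le F$. Any fixity-$3$ element must preserve some $N$-orbit, hence lies in $F$ (elements of $G\setminus F$ swap the two $N$-orbits and fix no points). Lemma \ref{fix3sol1c} then forces its three fixed points into one $N$-orbit, since case (2) there demands three distinct $N$-orbits; and Lemma \ref{fix3sol1b} rules out its case (1) (uniqueness fails, as $g \in F$ stabilises both $N$-orbits), so case (2) gives $|N|\in\{3,9\}$ and hence $|\Omega|\in\{6,18\}$. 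These two residual cases are verified in \texttt{GAP}, checking that each relevant transitive soluble fixity-$3$ subgroup of $\Sym_6$ or $\Sym_{18}$ admits a regular normal subgroup of order $2|N|$. The main obstacle lies in the non-faithful subcase of (i) and these small-degree verifications in (ii); both are finite computations, cleanest via \texttt{GAP}.
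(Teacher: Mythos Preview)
Your proposal is correct and follows essentially the same route as the paper: part~(iii) is the preimage/regularity argument, and parts~(i) and~(ii) reduce via Lemmas~\ref{fix3sol1}, \ref{fix3sol1b}, \ref{fix3sol1c} to small-degree \texttt{GAP} checks (the paper in fact handles (i) entirely by \texttt{GAP}, without your faithful/non-faithful split). The only wrinkle is that your counting sketch for the non-faithful subcase of~(i) is phrased as producing a \emph{contradiction}, whereas that subcase need not be impossible---it may simply be a case where $G$ has a regular normal subgroup arising by a different mechanism than~(iii)---but since you explicitly fall back on \texttt{GAP} there, the argument stands.
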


\begin{proof}
In Case (i) we check the transitive subgroups of $\Sym_9$ that act with fixity 3, and then with \texttt{GAP} (\cite{GAP}) we can always find a regular normal subgroup.

In Case (ii) we begin with the case where $|\bar \Omega|=1$. Then $N$ is transitive, hence regular by Lemma \ref{fix3sol1} (because the second case there cannot occur now).
Next suppose that $|\bar \Omega|=2$.

Let $\alpha, \beta \in \Omega$ be such that $\bar \alpha$ and $\bar \beta$ are exactly the two elements in $\bar \Omega$.
Let $g \in G$ be of prime order and such that $g$ fixes exactly three points on $\Omega$, as guaranteed by our hypothesis.
Then $g$ cannot interchange $\bar \alpha$ and $\bar \beta$, but it stabilises them both, and then Case (2) of Lemma \ref{fix3sol1b} must hold. Now $|N| \in \{3,9\}$.
It follows that $\Omega$ has 6 or 18 elements, depending on whether $|N|=3$ or $|N|=9$.
Again we use \texttt{GAP} (\cite{GAP}) in order to check for transitive subgroups of $\Sym_6$ or $\Sym_{18}$ that act with fixity 3 and that have a minimal normal subgroup as in our situation, and then we obtain that $G$ has a regular normal subgroup.

In Case (iii) we suppose that $\bar R \unlhd \bar G$ acts regularly on $\bar \Omega$ and we let $R$ denote its full pre-image in $G$.
Then $R \unlhd G$. If $\alpha, \beta \in \Omega$ are in distinct $N$-orbits, then 
the transitive action of $\bar R$ gives an element $x \in R$ such that $\bar x$ maps $\bar \alpha$ to $\bar \beta$. Now $N\langle x \rangle \le R$ contains an element that maps $\alpha$ to $\beta$, which means that $R$ is transitive.
If $\beta \in \bar \alpha$, then we find an element in $N \le R$ that maps $\alpha$ to $\beta$. 

Finally, if $\alpha \in \Omega$ and $y \in R_\alpha$, then $\bar y$ fixes the point $\bar \alpha$ or $\bar \Omega$, which implies that $\bar y =1$ and $y \in N$. Since $N_\alpha=1$ by hypothesis in (iii), we deduce that all point stabilisers in $R$ are trivial and that $R$ is indeed a regular normal subgroup of $G$. 

\end{proof}

\begin{lemma}\label{fix3solSC1}
Suppose that Hypothesis \ref{3solu} holds, that $N$ is semi-regular and that $|\bar \Omega|=3$.
Then $G$ has a normal subgroup that is regular or that has index 3 or 6, with three orbits and Frobenius group action.
\end{lemma}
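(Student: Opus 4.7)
The plan is to take $K$ to be the kernel of the action of $G$ on $\bar\Omega$ and to show that, unless $G$ already has a regular normal subgroup, $K$ itself is the desired Frobenius normal subgroup of index $3$ or $6$. Since $G$ is transitive on $\bar\Omega$, the quotient $G/K$ embeds as a transitive subgroup of $\Sym_3$, giving $|G:K|\in\{3,6\}$; and since $K/N$ stabilises each $N$-orbit setwise, the $K$-orbits on $\Omega$ are exactly the three $N$-orbits, each of size $|N|$.

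First I would dispose of the case where $\bar G$ has fixity $0$ or $1$ on $\bar\Omega$. Since no element can fix exactly two of three points, Lemma \ref{fix3sol1a} forces the fixity of $\bar G$ on $\bar\Omega$ to lie in $\{0,1,3\}$. When it is $0$ or $1$, $\bar G$ is regular or Frobenius on $\bar\Omega$, so it contains a normal regular subgroup of order $3$; Lemma \ref{fix3solReg}(iii) then provides a regular normal subgroup of $G$. So I may assume the fixity is $3$, which forces $K>N$ and ensures that $K$ has non-trivial point stabilisers on its orbits.

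To show $K$ is Frobenius on each of its orbits, I would suppose for contradiction that some $x\in K^\#$ of prime order $r$ fixes two distinct points $\alpha,\alpha'\in\bar\alpha$. Lemma \ref{fix3sol1c} then forces all fixed points of $x$ to lie in $\bar\alpha$. If $x$ has three fixed points in $\bar\alpha$, then because $x\in K$ stabilises every $N$-orbit, case (1) of Lemma \ref{fix3sol1b} fails and case (2) gives $|N|\in\{3,9\}$: when $|N|=3$, $|\Omega|=9$ and Lemma \ref{fix3solReg}(i) supplies a regular normal subgroup, while when $|N|=9$ we have $|\Omega|=27$ and a short \texttt{GAP} check of transitive subgroups of $\Sym_{27}$ compatible with our hypotheses settles the case.

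The main case is when $x$ has exactly two fixed points, both in $\bar\alpha$; then $x$ acts semi-regularly on $\bar\beta$ and $\bar\gamma$ (each of size $p^n$), forcing $r=p$. Setting $H:=N\langle x\rangle\le K$, a $p$-group of order $p^{n+1}$ with $H$-conjugate point stabilisers of order $p$ on each $N$-orbit, the stabiliser constraint on $\bar\alpha$ (each must fix two points) forces $p=2$. The key observation is that any involution $y\in H$ with a fixed point in $\bar\beta$ can have no fixed point in $\bar\alpha$, for otherwise $y$ would lie in a point stabiliser on $\bar\alpha$ and therefore fix two points there, giving three fixed points split $2+1$ between $\bar\alpha$ and $\bar\beta$ in violation of Lemma \ref{fix3sol1c}. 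Hence the involutions of $H$ sitting in point stabilisers on $\bar\alpha$, $\bar\beta$ and $\bar\gamma$ form three pairwise disjoint subsets of $H\setminus N$; counting their sizes in terms of the fixities $f_\beta,f_\gamma\in\{1,2\}$ of $H$ on $\bar\beta,\bar\gamma$ (which must divide $2^n$ and be at most $3$) produces more such involutions than $|H\setminus N|=2^n$, the desired contradiction. The most delicate step is this final count, especially pinning down the disjointness of the three involution sets and the possible values of $f_\beta,f_\gamma$; the $|N|=9$ subcase is the secondary obstacle, handled in the style of several neighbouring lemmas by a short \texttt{GAP} computation.
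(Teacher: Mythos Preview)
Your argument is correct and follows essentially the same route as the paper: set $K$ equal to the kernel of the action on $\bar\Omega$, note $|G:K|\in\{3,6\}$, and show that either a regular normal subgroup exists or $K$ acts as a Frobenius group on each of its three orbits, the obstruction being an element $x\in K^\#$ of prime order with two or three fixed points concentrated in a single $N$-orbit; both you and the paper eliminate the two-fixed-point case (which forces $p=2$) by an involution count in $N\langle x\rangle$.

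The organisational differences are minor but worth recording. The paper disposes of the case where $K$ is regular on its orbits by exhibiting directly a subgroup $R$ with $K\le R$ and $|R:K|=3$ and checking that $R$ is regular on $\Omega$, whereas you route this through the fixity of $\bar G$ on $\bar\Omega$ and Lemma~\ref{fix3solReg}(iii). For the three-fixed-point subcase you invoke Lemma~\ref{fix3sol1b} to obtain $|N|\in\{3,9\}$ immediately (then Lemma~\ref{fix3solReg}(i) for $|\Omega|=9$ and a \texttt{GAP} check for $|\Omega|=27$), while the paper instead reproves that reduction in context, carrying separate arguments for $|N|\le 9$, $|N|=27$ and $|N|\ge 81$. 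Your version is thus a little more streamlined through its systematic reuse of the earlier lemmas. One small sharpening: in your involution count the values $f_\beta,f_\gamma$ are in fact forced to equal $2$ rather than merely lying in $\{1,2\}$, since a $2$-element acting on a set of size $2^n$ has an even number of fixed points; this only strengthens your inequality and does not affect validity.
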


\begin{proof}
We let $K \unlhd G$ denote the kernel of the action of $G$ on $\bar \Omega$.
We know that $N \le K$ and that $G/K$ is isomorphic to a transitive subgroup of $\Sym_3$, because $G$ is transitive on $\Omega$.
Let $R \le G$ be such that $K \le R$ and $|R/K|=3$.
If $x\in R$ fixes a point $\alpha \in \Omega$, then it stabilises the orbit $\bar \alpha$, and therefore it cannot interchange the three elements of $\bar \Omega$ in a 3-cycle. Therefore $x\in K$ stabilises all three orbits, and we deduce that the elements in $R \setminus K$ do not fix any points of $\Omega$. If $K$ acts regularly on each of its orbits, then $R$ acts regularly on all of $\Omega$, and it has index 1 or 2 in $G$, which is one of our possibilities.

Suppose that $K$ does not act regularly on its orbits and let $y\in K$ be an element of prime order $r$ such that $y$ fixes a point $\alpha \in \Omega$. Then $y$ stabilises $\bar \alpha$ and counting gives two possibilities:
$y$ has exactly one fixed point on each $N$-orbit, and then $K$ acts as a Frobenius group on each $N$-orbit, leading to one of our possibilities again.
Or $y$ has order 2 or 3, with two or three fixed points on $\bar \alpha$ and no fixed points on the remaining $N$-orbits.

If $o(y)=2$, then $N$ and $K$ are $2$-groups, and they act with fixity 2 on $\bar \alpha$.
Since $|Z(G)|$ divides $3$ by Lemma \ref{normaliser}(iii), we know that $|N| \neq 2$ and therefore $|\bar \alpha| \ge 4$. Now $|N\langle y \rangle|=2^{n+1}$ and the $2^n-1$ involutions from $N$ act without fixed points, whereas there must be $3 \cdot 2^{n-1}$ pairs of points that are fixed by the conjugates of $y$. Together, this gives $2^{n+1}-1$ involutions, which implies that $N\langle y \rangle$ is elementary abelian, contrary to the fact that this group has a centre of order at most 2 (Lemma \ref{normaliser}(ii)).

If $o(y)=3$, then $N$ and $K$ are $3$-groups and we argue in a similar way, in parallel to Lemma \ref{fix3sol1b}.
If $|N| \le 9$, then we can use \texttt{GAP} (\cite{GAP}) and check the possibilities, and we obtain that the first case of our lemma holds.
If $|N| =27$, then we count the elements of order 3 in $N \cdot \langle y \rangle$ depending on whether they have three fixed points ($\frac{3 \cdot |N|}{3}$ possibilities for conjugates of $y$) or none ($|N|-1$ elements).
Together this gives that all elements in $N \cdot \langle y \rangle$ have order at most 3, and then we can use \texttt{GAP} (\cite{GAP}) again to check that this is impossible for a $3$-group that acts with fixity 3.
Now $|N| \ge 81$ and we can argue as in the corresponding case of Lemma \ref{fix3sol1b}, leading to a contradiction.
\end{proof}

\begin{lemma}\label{fix3solSC2}
Suppose that Hypothesis \ref{3solu} holds, that $|\bar \Omega|\ge 3$ and that $\bar G$ acts with fixity 2 on $\bar \Omega$.
Then $G$ has a regular normal subgroup.
\end{lemma}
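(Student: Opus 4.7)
The plan is to apply Theorem~\ref{solu2main} to the action of $\bar G$ on $\bar\Omega$---this is legitimate since in the context in which the lemma will be used, $G$ and hence $\bar G$ is soluble---and then to work through each of the three resulting structural cases. First I would verify that $N$ acts semi-regularly on $\Omega$: Lemma~\ref{fix3sol1} offers only two possibilities, and the exceptional one has $|\bar\Omega|=2$, which contradicts our hypothesis $|\bar\Omega|\geq 3$.

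If Case~(1) of Theorem~\ref{solu2main} holds, so that $\bar G$ has a regular normal subgroup on $\bar\Omega$, then Lemma~\ref{fix3solReg}(iii) immediately lifts this to a regular normal subgroup of $G$. If Case~(3) holds, with $\bar G\cong \Alt_4$ or $\Sym_4$ and $|\bar\Omega|=6$, I would check from the explicit action of these groups on six cosets that every non-trivial element fixes either $0$ or exactly $2$ points of $\bar\Omega$, so that $\bar G$ acts as a $(0,2)$-group on $\bar\Omega$. Lemma~\ref{fix3sol1d} then forces $|N|\in\{3,9\}$ and hence $|\Omega|\in\{18,54\}$; a finite inspection of the transitive soluble subgroups of $\Sym_{18}$ and $\Sym_{54}$ of the prescribed order and fixity, carried out with \texttt{GAP}, would rule out this case.

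The main obstacle is Case~(2) of Theorem~\ref{solu2main}: there exists $\bar F\unlhd \bar G$ of index~$2$ with two equal-length orbits $\bar\Omega_1,\bar\Omega_2$, on each of which $\bar F$ acts as a Frobenius group with common kernel $\bar K$ (which is characteristic in the abstract group $\bar F$ and hence normal in $\bar G$). Letting $K\unlhd G$ denote the pre-image of $\bar K$, the semi-regularity of $N$ combined with the regularity of $\bar K$ on each $\bar\Omega_i$ would show that $K$ acts regularly on each of its two orbits in $\Omega$; so $K$ is ``almost'' what we want, and it only remains to promote it to a transitive normal subgroup. My approach is to pick a Frobenius complement $\bar H$ (necessarily non-trivial, because otherwise $\bar F=\bar K$ would act regularly and we would already be in Case~(1)), choose $\bar h\in\bar H$ of prime order $r$, lift to $h\in G$, and use that $\bar h$ fixes exactly one point on each $\bar\Omega_i$ so that $h$ stabilises precisely the two $N$-orbits $\bar\alpha,\bar\beta$. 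Applying Lemmas~\ref{fix3sol1b} and~\ref{fix3sol1c} to the fixed-point pattern of $h$ on $\Omega$, while distinguishing the subcases $r=p$ and $r\neq p$, should either reduce to $|N|\in\{3,9\}$ (a small case handled by a finite check) or produce an element $z\in G\setminus F$ whose image in $G/K$ is a central involution, so that $\langle K,z\rangle$ is a transitive normal subgroup of $G$ of order $|\Omega|$ with trivial point stabilisers, hence regular. Establishing the centrality of $zK$ in $G/K$ in the non-small cases is the most delicate step; I expect it to require a centraliser computation in $G/K$ together with a further appeal to the fixity~$3$ hypothesis on $G$.
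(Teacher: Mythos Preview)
Your overall plan---apply Theorem~\ref{solu2main} to $\bar G$ and treat the three cases---matches the paper, and your Case~(1) is identical. The difficulties lie in Cases~(2) and~(3).

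\textbf{Case~(2): a genuine gap.} You reserve the $(0,2)$-group observation for Case~(3), but it applies equally well here: elements of $\bar G\setminus\bar F$ swap the two $\bar F$-orbits and so fix nothing, elements of the Frobenius kernel $\bar K$ act freely, and every element of $\bar F\setminus\bar K$ lies in a conjugate of a complement and hence fixes exactly one point in each orbit. Thus $\bar G$ is a $(0,2)$-group in Case~(2) too, and Lemma~\ref{fix3sol1d} gives $|N|\in\{3,9\}$ \emph{always}. Your elaborate branch producing a central involution $zK$ is therefore unnecessary, and since you yourself flag the centrality step as speculative, it is better dropped. More seriously, your fallback ``small case handled by a finite check'' does not work as stated: knowing $|N|\in\{3,9\}$ does \emph{not} bound $|\Omega|=|N|\cdot|\bar\Omega|$, because $|\bar\Omega|$ can be arbitrarily large. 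The paper's remedy is to pass to a bounded auxiliary set: take $M$ the pre-image of $N_{\bar G}(\bar H)$, note $|M:H|=2$ and $\bar G=\bar K\rtimes\bar M$, and let $\Delta=\bar\alpha\cup\bar\beta$ be the union of the two $N$-orbits fixed by $\bar H$. Then $|\Delta|=2|N|\in\{6,18\}$, $M$ acts transitively on $\Delta$ with fixity at most~$3$, and a \texttt{GAP} check on this bounded action produces a regular subgroup $R\le M$; then $\bar K\rtimes\bar R$ is regular normal in $\bar G$ and Lemma~\ref{fix3solReg}(iii) finishes.

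\textbf{Case~(3): different route.} Your reduction via Lemma~\ref{fix3sol1d} to $|\Omega|\in\{18,54\}$ followed by a \texttt{GAP} search is plausible, but the paper eliminates this case by a short direct argument with no computation: take $g\in G$ of prime order with three fixed points; since point stabilisers in $\bar G$ on six points are $2$-groups, $\bar g$ is an involution fixing exactly two $N$-orbits $\bar\alpha,\bar\beta$, so $o(g)=2$. Lemma~\ref{fix3sol1c} forces all three fixed points of $g$ into one orbit, say $\bar\alpha$, whence $|N|\equiv 3\pmod 2$; but $g$ acts fixed-point-freely on $\bar\beta$, so $2\mid|N|$---a contradiction. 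This avoids both the $(0,2)$ reduction and any machine check.
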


\begin{proof}
We apply Theorem \ref{solu2main} to $\bar G$ and $\bar \Omega$ and look at the three cases. Case (i) immediately gives our claim, because of Lemma \ref{fix3solReg}. 

Next we consider Case (ii), and we let $\bar F$ denote a normal subgroup of $\bar G$ 
of index 2, with two orbits on $\bar \Omega$, acting as a Frobenius group on each of them. 
This means that $\bar G$ acts as a (0,2)-group on $\bar \Omega$, and Lemma \ref{fix3sol1d}
gives that $|N| \in \{3,9\}$.
Let $F$ denote the full pre-image of $\bar F$ in $G$ and let $K$ denote the full pre-image of the Frobenius kernel of $\bar F$. 
Finally, let $\bar H$ be a Frobenius complement in $\bar F$, with full pre-image $H$ in $G$, 
and let $M \le G$ be the full pre-image of $N_{\bar G}(\bar H)$ in $G$.
Then $|M:H|=2$ because $\bar G$ acts by conjugation on the set of Frobenius complements of $\bar F$, and we deduce that $\bar G=\bar K \rtimes \bar M$.
Let $\bar \alpha$ and $\bar \beta \in \bar \Omega$ be the two fixed points of $\bar H$, one in each $\bar F$-orbit.
Then $\bar M$ interchanges them, which implies that $M$ is transitive on the set $\Delta:=\bar \alpha \cup \bar \beta$. This set has size $2 \cdot |N| \in \{6,18\}$, and $M$ acts with fixity at most 3 on it. 
Now we can use \texttt{GAP} (\cite{GAP}) and find a regular subgroup $R$ of $M$ in the action on $\Delta$. 
Then $\bar K \rtimes \bar R$ is a regular normal subgroup of $\bar G$ and Lemma \ref{fix3solReg} gives our statement.

Finally, we prove that Case (iii) does not even occur. Otherwise $|\bar \Omega|=6$, $\bar G$ is isomorphic to $\Alt_4$ or $\Sym_4$, and we recall that $G$ acts with fixity 3 on $\Omega$. Let $g \in G$ be an element of prime order $r$ and with exactly three fixed points.
Then $g \notin N$, but $g$ stabilises at least one $N$-orbit and therefore $1 \neq \bar g$ is contained in a point stabiliser in $\bar G$.
The way that $\bar G$ acts on a set of size 6 with fixity 2 implies that the point stabilisers are $2$-groups and that $\bar G$ acts as a $(0,2)$-group. 
In particular $\bar g$ is a 2-element, it fixes exactly two points $\bar \alpha$ and $\bar \beta$ on $\bar \Omega$ and $o(g)=2$.
The three fixed points of $g$ on $\Omega$ are all contained in one $N$-orbit, say $\bar \alpha$,
which forces $|\bar \alpha|=|N|$ to be congruent to 3 modulo $2$, but also divisible by $2$.
This is impossible.
\end{proof}

\vspace{0.5cm}
\underline{Proof of Theorem \ref{fix3solu}:}

Let $N$ be a minimal normal subgroup of $G$. Since $G$ is non-trivial and soluble, we know that $N$ is non-trivial and elementary abelian, and we may suppose that Hypothesis \ref{3solu} holds, including all the notation there.
If the second case of Lemma \ref{fix3sol1} is true, then we can stop because this is included as (3) in the theorem.
Otherwise $N$ acts semi-regularly and $\bar G$ acts with fixity at most 3 on $\bar \Omega$ by Lemma \ref{fix3sol1a}.

If $|\bar \Omega|\le 2$, then Lemma \ref{fix3solReg}(ii) implies our claim.
If $|\bar \Omega| = 3$, then we apply Lemma \ref{fix3solSC1}.

From now on we suppose that $|\bar \Omega| \ge 4$, and we recall 
 that $\bar G$ acts with fixity at most 3 on $\bar \Omega$.
It does not act regularly because $G$ has non-trivial point stabilisers.
If $\bar G$ acts as a Frobenius group with Frobenius kernel $\bar K$, 
then $\bar K$ is a regular normal subgroup of $\bar G$. Then Lemma \ref{fix3solReg}(iii) yields that (1) holds.
If $\bar G$ acts with fixity 2, then Lemma \ref{fix3solSC2} gives that $G$ has a regular normal subgroup.

Finally, we are left with the case where $\bar G$ acts with fixity 3 on $\bar \Omega$.
Still, $|\bar \Omega| \ge 4$, and now we assume for a contradiction that the theorem is false.
We let $G$ be a minimal counterexample and we apply the theorem to $\bar G$.
If Case (1) holds, then $\bar G$ has a regular normal subgroup,and  then so does $G$, by Lemma \ref{fix3solReg} (iii).
Case (2) of our theorem leads to a more detailed analysis for $\bar G$ and $G$, which is why we leave this case for last.

If Case (3) of the theorem holds, then $|\bar\Omega|=6$ and $\bar G$ has structure $(C_3 \times C_3) : C_4$.
Since we know the points stabiliser structure as well, we have details about the fixed points of elements in $\bar G$:
Fixity 3 is exhibited by elements of order $3$ of $\bar G$ that stabilise exactly three $N$-orbits and interchange the reminaing three, and elements of order $2$ in the point stabilisers have exactly two fixed points.
Next we investigate the consequences for $N$ in this situation. Given that $\bar G$ is a $\{2,3\}$-group, it is natural to distinguish the cases where $N$ is a $2$-group, a $3$-group or a $p$-group for some prime $p\ge 5$.

First assume that $p=2$.
If $|N|=2$, then $|\Omega|\ge 8$ and we recall that some element of order $3$ of $\bar G$ fixes three $N$-orbits, and then a pre-image has to fix all six elements of $\Omega$ in these orbits. This contradicts our main fixity hypothesis.   
If $|N| \ge 4$, then we let $n \in \N$ be such that $|N|=2^n$ and we let
$\alpha \in \Omega$ be such that $s\in G_\alpha$ is a $2$-element and $\bar s$ fixes two $N$-orbits. Let $\beta \in \Omega$ be such that $s$ stabilises $\bar \alpha$ and $\bar \beta$. 
We note that $s$ is a $2$-element and that $\bar \alpha$ has $|N|$ elements, which forces $s$ to have exactly two fixed points on $\alpha^N$ and no other fixed points on $\Omega$. In particular $s$ acts like a transposition on $\Omega$. 
At the same time, it is the square of an element of order 4 (from the structure of $\bar G$ and because $N$ acts semi-regularly), and this is a contradiction. 

Next assume that $p=3$.
We let $\alpha \in \Omega$ be such that $x\in G_\alpha$ is a $3$-element and $\bar x$ fixes three $N$-orbits.
Since the fixed points of $x$ in $\Omega$ must lie in the fixed $N$-orbits, this situation forces $x$ to fix three points in one of the $N$-orbits and otherwise to have regular orbits. Now Lemma \ref{fix3sol1b}~(b) must hold, i.e. $|N| \in \{3, 9\}$. 

If $|N| = 3$,
then we first note that $|G:C_G(N)|\le 2$. We recall the structure of $\bar G$: The elements of order 4 square to elements that fix exactly two points in $\bar \Omega$. Hence if $g \in G$ has order 4, then $g^2$ has two fixed points, but also $g^2 \in C_G(N)$.
Since $|N|=3$, $g^2$ fixes a point in the stabilised $N$-orbits, and then $N$ fixes these points as well. This is a contradiction because $N$ acts semi-regularly. 
Now $|N| = 9$ and we let $M \unlhd G$ be such that $|G:M|=4$, again using the structure of $\bar G$.
In particular $M \in \sy_3(G)$, $|M|=81$ and we recall that fixity 3 is exhibited by $3$-elements. 
Lemma 2.20 of \cite{MW3} gives that $M$ has maximal class, and we see that $Z(M)$ has order $3$ and is contained in $N$.
On the other hand $Z(M) \unlhd G$, which means that $N$ is not a minimal subgroup. This is a contradiction. 

We conclude that $p \ge 5$. Now let $\alpha \in \Omega$ and let $x \in G_\alpha^\#$.
Then $x$ fixes two or three points, and therefore $C_N(x)=1$.
On the other hand, $G_\alpha \cong \bar G_{\bar \alpha} \cong \Sym_3$ is a Frobenius group that acts by conjugation on the abelian group $N$, with coprime orders. This means that 8.3.5 from \cite{KS} is applicable, and it gives a contradiction.\\

Finally, we turn to 
Case (2) of our theorem for $\bar G$. Then we let $\bar F \unlhd \bar G$ be as described there, and we let $F \unlhd G$ denote the full pre-image of $F$.
Then $|G:F| \in \{3,6\}$ and therefore, if we let $R \le G$ denote the full pre-image of a Sylow 3-subgroup of $G/F$, then it has index 1 or 2 in $G$. In particular $R \unlhd G$.

Suppose that $F$ acts semi-regularly on $\Omega$.
Since $N \le F$, the $F$-orbits on $\Omega$ are unions of $N$-orbits, and then the transitivity of $G$ on 
$\Omega$ implies that there are three $F$-orbits and that the elements in    
$R \setminus F$ interchange them in a $3$-cycle. 
Together this gives that $R$ acts regularly on $\Omega$, and (1) holds.

Now suppose that there is some $\alpha \in \Omega$ such that $F_\alpha \neq 1$.
Then $x \in F_\alpha$ stabilises $\bar \alpha$ and the $F$-orbit $\bar \alpha^{\bar F}$.
In particular $\bar x$ is contained in one of the Frobenius complements in $\bar F$, which means that it fixes exactly one point in each $\bar F$-orbit, and therefore $x$ stabilises three $N$-orbits in total.
This gives two possibilities:
$x$ fixes exactly one point on each $N$-orbit that it stabilises, or it has order 3 and fixes three points in $\bar \alpha$. In particular $F$ acts like a $(0,3)$-group on $\Omega$.
In the first case $F$ acts like a Frobenius group on each of its orbits, as in (2).
In the second case we turn to Lemma \ref{fix3sol1b}.
We recall that $x$ stabilises three $N$-orbits, with three fixed points on one of them, and then the lemma implies that $|N| \in \{3,9\}$.

Case 1: $|N|=3$.

Then we argue similarly to Lemma \ref{fix3solSC2}.
Hence we let $\bar K$ denote the Frobenius kernel of $\bar F$, with full pre-image $K$ in $G$, 
we let $\bar H$ be a Frobenius complement in $\bar F$, with full pre-image $H$ in $G$, and we let $M \le G$ be the full pre-image of $N_{\bar G}(\bar H)$ in $G$.
Then $\bar G=\bar K \rtimes \bar M$ and $\bar H$ fixes exactly three $N$-orbits.
Their union $\Delta$ has size 9, $M$ acts faithfully, transitively and with fixity 3 on it, and then 
we can see with \texttt{GAP} (\cite{GAP}) that $M$ has a normal subgroup $R$ that acts regularly on $\Delta$. 
Then $\bar K \rtimes \bar R$ is a regular normal subgroup of $\bar G$ and Lemma \ref{fix3solReg} gives our statement.

Case 2: $|N|=9$.

We recall that  $N$ acts semi-regularly.
Let $C:=C_G(N)$ and assume that some $c\in C^\#$ fixes a point $\omega \in \Omega$. 
Then a subgroup of index at most $3$ of $N$ is contained in $G_\omega$ (by Lemma \ref{normaliser}(i)), which is impossible.
Thus $C$ acts semi-regularly on $\Omega$, and $G/C$ is isomorphic to a subgroup of $\aut(N) \cong \GL_2(3)$. We started the present case with $x \in F_\alpha^\#$ and we saw that $o(x)=3$ and that $x$ fixes exactly three points in an $N$-orbit.
This means that, if we consider the action of $G/C$ on the set of $C$-orbits, then the point stabilisers have order divisible by $3$, hence they contain a Sylow $3$-subgroup of $G/C$, respectively.
The index of the point stabilisers in $|G/C|$ divides $2^4$ because $|\GL_2(3)|=2^4 \cdot 3$, which means that the size of the set that $G/C$ acts on divides $2^4$. We conclude that the number of $C$-orbits on $\Omega$  is $2^k$, where $1 \le k \le 4$.
This implies that there is no $F$-orbit that is contained in a $C$-orbit.

Conversely, assume for a contradiction that $\alpha \in \Omega$ is such that 
$\alpha^C \subseteq \alpha^F$. Then $C$ stabilises the set $\alpha^F$ and acts semi-regularly on it. 
If $d \in \N$ is the number of $C$-orbits in $\alpha^F$, then it follows that $\Omega$ consists of $3\cdot d=2^k$ $C$-orbits, which is not possible.
In particular $C \nleq F$, and therefore $C \cap F \neq C$.
Since $|G:F| \in \{3,6\}$, it follows that $|C:C \cap F| \in \{3,6\}$ as well.
We recall that $F$ and $C$ are normal subgroups of $G$, 
both containing $N$.
This implies that $C$ permutes the set of $F$-orbits, which are unions of $N$-orbits, and vice versa.
Moreover, the previous paragraph gives that $C/C \cap F$ permutes the three $F$-orbits regularly, and then $|C:C \cap F|=3$.
 
We recall that $\bar F$ acts as a Frobenius group, and we let $\bar K$ denote its Frobenius kernel and $K$ the full pre-image in $G$. Then the $F$-orbits are exactly the $K$-orbits, 
$CK \unlhd G$ and we will prove that $CK$ acts regularly, as in (1).
For this it suffices to prove that 
$|CK|=|\Omega|$.
We note that $C \cap F=C \cap K$, because $C$ acts semi-regularly on $\Omega$, and then 
$|CK|=\frac{|C| \cdot |K|}{|C \cap K|}=3 \cdot |K|=|\Omega|$.


This concludes the proof.



\end{document}